\newtheorem{theorem}{Theorem}
\newtheorem{remark}{Remark}
\newtheorem{remarks}[remark]{Remarks}
\newtheorem{lemma}{Lemma}
\newtheorem{corollary}{Corollary}
\newtheorem{definition}{Definition}
\newcommand{\ii}{\infty}
\newcommand\R{{\ensuremath {\mathbb R} }}
\newcommand\C{{\ensuremath {\mathbb C} }}
\renewcommand\phi{\varphi}
\newcommand{\gH}{\mathfrak{H}}
\newcommand{\gS}{\mathfrak{S}}
\newcommand{\cP}{\mathcal{P}}
\newcommand{\cX}{\mathcal{X}}
\newcommand{\cB}{\mathcal{B}}
\newcommand{\cK}{\mathcal{K}}
\newcommand{\cE}{\mathcal{E}}
\newcommand{\cF}{\mathcal{F}}
\newcommand{\cN}{\mathcal{N}}
\newcommand{\cH}{\mathcal{H}}
\renewcommand{\epsilon}{\varepsilon}
\newcommand\pscal[1]{{\ensuremath{\left\langle #1 \right\rangle}}}
\newcommand{\norm}[1]{ \left| \! \left| #1 \right| \! \right| }
\newcommand{\tr}{{\rm Tr}}
\renewcommand{\geq}{\geqslant}
\renewcommand{\leq}{\leqslant}
\numberwithin{equation}{section}
\begin{document}

\title[Blowup for generalized Hartree-Fock equations]{On blowup for time-dependent \\ generalized Hartree-Fock equations}

\author[C. Hainzl]{Christian HAINZL}
 \address{Department of Mathematics, UAB, Birmingham, AL 35294-1170, USA.}
  \email{hainzl@math.uab.edu}

\author[E. Lenzmann]{Enno LENZMANN}
 \address{Department of Mathematical Sciences, University of Copenhagen, Universitetspark 5, 2100 Copenhagen \O, Denmark.}
  \email{lenzmann@math.ku.dk}

\author[M. Lewin]{Mathieu LEWIN}
 \address{CNRS \& Laboratoire de Mathématiques (CNRS UMR 8088), Universit\'e de Cergy-Pontoise, 2 Avenue Adolphe Chauvin, 95302 Cergy-Pontoise Cedex, France.}
  \email{Mathieu.Lewin@math.cnrs.fr}

\author[B. Schlein]{Benjamin SCHLEIN}
 \address{Centre for Mathematical Sciences, University of Cambridge, Wilberforce Road, Cambridge, CB3 0WB, UK.}
  \email{b.schlein@dpmms.cam.ac.uk }

\date{\today. \scriptsize~\copyright~2009 by the authors. This paper may be reproduced, in its entirety, for non-commercial~purposes.}

\begin{abstract}
We prove finite-time blowup for spherically symmetric and negative energy solutions of Hartree-Fock and Hartree-Fock-Bogoliubov type equations, which describe the evolution of attractive fermionic systems (e.\,g.~white dwarfs). Our main results are twofold: First, we extend the recent blowup result of [Hainzl and Schlein, \textit{Comm.\,Math.\,Phys.} \textbf{287} (2009), 705--714]  to Hartree-Fock equations with infinite rank solutions and a general class of Newtonian type interactions.  Second, we show the existence of finite-time blowup for spherically symmetric solutions of a Hartree-Fock-Bogoliubov model, where an angular momentum cutoff is introduced. We also explain the key difficulties encountered in the full Hartree-Fock-Bogoliubov theory.    
\end{abstract}

\maketitle


\section{Introduction}

In this paper, we study time-dependent generalized Hartree-Fock equations that arise as quantum fermionic models for the dynamics of relativistic stars (like white dwarfs and neutron stars).

Here, our particular interest is devoted to the existence of finite-time blowup solutions, which physically indicate the onset of ``gravitational collapse'' of a very massive star due to its own gravity. Indeed, such a catastrophic scenario was already predicted by the physicist S.~Chandrasekhar in 1930's, based on an intriguing and heuristic combination of special relativity and Newtonian gravity. However, it is fair to say that a rigorous understanding of the dynamics of the gravitational collapse of a star -- in terms of relativistic quantum mechanics and Newtonian gravity to start with -- is still at its beginning. In fact, the proposed mathematical models (such as the generalized Hartree-Fock equations considered below) are formulated as nonlinear dispersive evolution equations, which exhibit the essential feature {\em $L^2$-mass-criticality} and thus reflecting the physical fact of a so-called Chandrasekhar limit mass $$M_{*} \approx 1.4 M_{\rm Sun},$$ beyond which gravitational collapse of a cold star can occur. For a detailed discussion of Hartree-Fock type theory in astrophysics, we refer to \cite{FroLen-07, HaiSch-09} and references given there. See also \cite{LieThi-84,LieYau-87} for rigorous work on Chandrasekhar's theory in the time-independent setting.

Let us now briefly outline the main content of the present paper. As a relativistic quantum model of the dynamics of star, we consider time-dependent generalized Hartree-Fock equations with relativistic kinetic energy and an attractive two-body interaction of Newtonian type. More specifically, we study equations of {\em Hartree-Fock (HF)} and {\em Hartree-Fock-Bogoliubov (HFB)} type, and our main results on finite-time blowup of spherically symmetric solutions for these models will be stated in Sections \ref{sec:HF} and \ref{sec:HFB}, respectively. 

In fact, this paper aims at two directions: First, we consider the HF equations and we extend the recent blowup results of \cite{HaiSch-09} to solutions with infinite-rank density matrices and a broader class of two-body interactions. Second, we address the finite-time blowup for HFB type evolution equations, which also incorporate the physically important phenomenon of {\em Cooper pairing} for attractive fermion systems. However, it turns out that the analysis of HFB equations brings in new key difficulties, which were completely absent for HF. Most importantly, the nonconservation of higher moments of angular momenta in HFB theory makes an adaptation of the strategies used in \cite{FroLen-07,HaiSch-09} an insurmountable task at the present. To circumvent this difficulty, we introduce an HFB model with an {\em angular momentum cutoff} and prove a finite-time blowup result for this simplified model; see Section \ref{sec:HFB} for more details. In the long run, we do hope that our arguments developed in this paper will be of use to understand the finite-time blowup for the full-fledged HFB model without any cutoffs.

\subsection*{Outline of the paper}
This paper is organized as follows. In Sections \ref{sec:HF} and \ref{sec:HFB}, we first collect some preliminary facts and we then state our main results on finite-time blowup for spherically symmetric solutions of HF and HFB type models. Furthermore, we explain the key difficulties encountered for HFB theory. The proofs of the main theorem will be given in Section \ref{sec:proofs}, and Appendix A contains the proof of a technical lemma.

\subsection*{Conventions and notations.}
For the physically inclined reader, we remark that we employ units such that Planck's constant $\hbar$ and the speed of light $c$ both satisfy $\hbar = c =1$. As usual, the letter $C$ denotes a positive constant depending only on fixed quantities (such as initial data etc.) and $C$ is allowed to change from line to line.

\subsection*{Acknowledgments}
Part of this work was done during the authors' visit at the Erwin Schr\"odinger Institute for Mathematical Physics in Vienna, Austria, and the hospitality and support during this visit is gratefully
acknowledged. C.\,H.~is partially supported by NSF grant DMS-0800906. E.\,L.~acknowledges support from NSF grant DMS-070249 and a Steno fellowship from the Danish research council. M.\,L.~acknowledges support from the ANR project ACCQuaRel of the French ministry of research.

\section{Blowup in Hartree-Fock theory} \label{sec:HF}

In this section, we consider the Hartree-Fock (HF) model describing a system of pseudo-relativistic fermions with average particle number $\lambda > 0$ and attractive interactions of Newtonian-type (like classical gravity). With regard to physical applications, we may think of such system representing a {\em white dwarf star}, where effects due to special relativity play a significant role but gravity can still be treated in the Newtonian framework. Of course such a model lacks Poincar\'e covariance, and hence it is usually referred to  as {\em pseudo-relativistic}. Nonetheless, many essential effects (such as the existence of a {\em  Chandrasekhar limit mass} of white dwarfs) are already predicted -- even quantitatively within good approximation -- by such pseudo-relativistic models. Indeed, it should not go unmentioned here that S.~Chandrasekhar's original and acclaimed physical theory of gravitational collapse rests on a pseudo-relativistic model of a star in its semi-classical regime; see \cite{Chandra-31}.

 According to HF theory, the state of the system is described in terms of a \emph{density matrix}  $\gamma$ which is a self-adjoint operator acting on $L^2(\R^3;\C)$\footnote{For simplicity, we discard the spin of the particles throughout. But all our arguments can be easily generalized to particles having $q$ internal degrees of freedom, where $L^2(\R^3;\C)$ has to be replaced by $L^2(\R^3;\C^q)$ etc.
}, satisfying $0\leq\gamma\leq 1$ (reflecting the Pauli principle for fermions) and the normalization condition $\tr(\gamma)=\lambda$ fixing the (average) number of particles.

Let $V$ denote the interaction potential between the particles and, for notational convenience, let $\kappa > 0$ be a coupling constant controlling the strength of the interaction. Then the corresponding HF energy reads
\begin{equation}\label{eq:HF2} 
\cE_{\rm HF} (\gamma) = \tr(K  \gamma) + \frac{\kappa}{2} \iint V(x-y) \bigg(\rho_\gamma(x) \rho_\gamma(y) - |\gamma(x,y)|^2\bigg)\,dx\, dy \, . 
\end{equation} 
Here $\gamma(x,y)$ is the integral kernel of the trace-class operator $\gamma$ and $\rho_\gamma(x)=\gamma(x,x)$ is the associated density function. The pseudo-differential operator 
$$K=\sqrt{-\Delta + m^2}$$
describes the kinetic energy of a relativistic quantum particle with rest mass $m \geq 0$. 

In the present section, we are interested in the corresponding time-dependent HF equations, which can be formulated as the following initial-value problem:
\begin{equation}\label{eq:HF1} 
\boxed{\left\{\begin{array}{l}
\displaystyle i \frac{d}{dt} \gamma_t = \left[ H_{\gamma_t}  , \gamma_t \right], \\[1ex]
\displaystyle \gamma_{|t=0}=\gamma_0\in\cK_{\rm HF}.
\end{array}\right.}
\end{equation} 
Here $[A,B] = AB -BA$ denotes the commutator and
\begin{equation}
H_\gamma:=\sqrt{-\Delta+m^2} + \kappa \left( V * \rho_{\gamma} \right) - \kappa V(x-y)\gamma(x,y) 
\label{def:mean-field}
\end{equation}
is the so-called \emph{mean-field operator} which depends on $\gamma$ and acts on the one-body space $L^2(\R^3;\C)$. Here and henceforth, the symbol $*$ stands for convolution of functions on $\R^3$. For expositional convenience, we use a slight abuse of notation by writing $V(x-y)\gamma(x,y)$ for the operator whose integral kernel is the function $(x,y)\mapsto V(x-y)\gamma(x,y)$. We remark that the number of particles $\tr(\gamma_t)$ and the energy $\cE_{\rm HF}(\gamma_t)$ are both conserved along the flow given by \eqref{eq:HF1}, as we will detail later on. Also, the appropriate set $\cK_{\rm HF}$ of initial data for the evolution equation \eqref{eq:HF1} will be defined below.

More specifically, we consider radial interaction potentials $V=V(|x|)$ that are essentially Newtonian at small and large distances. By this, we mean that $V$ is of the form
\begin{equation} 
\boxed{V(|x|)=-\frac{1}{|x|}+w(|x|) .} 
\label{form_V} 
\end{equation}
Here $w$ is a smooth and decaying function, satisfying the following conditions that we impose throughout this paper:
\begin{equation}
w\in C^1[0,\ii),\qquad r\,|w(r)|\leq C\quad\text{and}\quad (1+r^2)^{1+\epsilon}\,|w'(r)|\leq C
\label{assumption_w}
\end{equation}
for all $r\geq0$ and some $\epsilon>0$. Physically, one may think of $w$ as describing screening effects due to internal degrees of freedom of the gravitating fermions. Indeed, as remarked above, our arguments carry over to the case where spin is taken into account, and $w=w(|x|)$ would be a $q\times q$ matrix instead of a multiple of the identity (with an adequate redefinition of the HF-energy function $\cE_{\rm HF}(\gamma)$ above). 

\medskip
In the special case of $\gamma$ being an orthogonal projection of finite rank (i.\,e, we have $\gamma = \sum_{i=1}^N |\psi_i \rangle \langle \psi_i|$) and vanishing $w \equiv 0$, the existence of finite-time blowup for the time-dependent HF equation (\ref{eq:HF1}) was recently studied by Hainzl and Schlein in \cite{HaiSch-09}, by extending the arguments developed by Fr\"ohlich and Lenzmann in \cite{FroLen-07}. The main result of \cite{HaiSch-09} established that any (sufficiently regular and rapidly decaying) initial datum $\gamma$ with spherical symmetry (see Definition \ref{def:sph_sym_HF}) and negative energy $\cE_{\rm HF} (\gamma) < 0$ leads to finite-time blowup of the solution $\gamma_t$. Note that the assumption of finite rank was important for the method used in \cite{HaiSch-09} to work. In this paper, we present a new technical approach which allows us to prove blowup for $\gamma$ having infinite rank. Moreover, we show here that the existence of blowup solution is stable with respect to a certain class of perturbations $w$ of the Newton interaction, satisfying \eqref{assumption_w}. 

\medskip

After this preliminary remarks, we now state our results on the HF equation \eqref{eq:HF1}. To this end, we denote by $\gS_p$ the Schatten space (see \cite{Simon-79,ReeSim4}) of operators $A$ acting on $L^2 (\R^3,\C)$ such that $\norm{A}_{\gS_p}^p=\tr|A|^p<\ii$.  Further we introduce a set of fermionic density matrices
\begin{equation}
\cK_{\rm HF}:=\left  \{\gamma=\gamma^*\in\cX_{\rm HF} : 0\leq \gamma\leq 1 \right \},
\end{equation}
where the Sobolev-type space $\cX_{\rm HF}$ is defined by
\begin{equation}\label{eq:Hs} \cX_{\rm HF} := \left \{ \gamma \in \gS_1 : \| \gamma \|_{\cX_{\rm HF}} < \infty \right \} \end{equation} with the norm 
\begin{equation}
 \| \gamma \|_{\cX_{\rm HF}} := \norm{ (1-\Delta)^{1/4} \gamma (1-\Delta)^{1/4}}_{\gS_1} \, .
\end{equation}
In fact, it turns out that the initial-value problem (\ref{eq:HF1}) is locally well-posed in $\cK_{\rm HF}$.

\begin{theorem}[Well-posedness in Hartree-Fock theory]\label{thm:LWP} Suppose that $w$ satisfies \eqref{assumption_w}. For each initial datum $\gamma_0\in\cK_{\rm HF}$, there exists a unique solution $\gamma_t \in C^0 ([0,T), \cK_{\rm HF}) \cap C^1([0,T); \cX_{\rm HF}')$ with maximal time of existence $0 < T \leq \infty$. Moreover, we have conservation of energy and expected particle number:
\begin{equation*}
\cE_{\rm HF} (\gamma_t) = \cE_{\rm HF} (\gamma_0)\quad \text{ and }\quad \tr (\gamma_t) = \tr (\gamma_0) \qquad \mbox{for} \quad 0 \leq t < T.
\end{equation*}
Finally, if the maximal time satisfies $T < \infty$, then $\tr (-\Delta)^{1/2} \gamma_t \to \infty$ as $t \to T^-$.
\end{theorem}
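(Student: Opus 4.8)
The plan is to solve \eqref{eq:HF1} by a contraction-mapping argument in the mild (Duhamel) formulation and then to read off the conservation laws and the continuation criterion from the resulting local theory. Writing $H_\gamma = K + W_\gamma$ with the nonlinear potential part $W_\gamma := \kappa(V*\rho_\gamma) - \kappa V(x-y)\gamma(x,y)$ and denoting by $\mathcal{U}(t) := e^{-itK}$ the free relativistic propagator, I would first recast the equation as the fixed-point problem
\begin{equation*}
\gamma_t = \mathcal{U}(t)\gamma_0\mathcal{U}(t)^* - i\int_0^t \mathcal{U}(t-s)\,[W_{\gamma_s},\gamma_s]\,\mathcal{U}(t-s)^*\,ds
\end{equation*}
on a ball of $C^0([0,T'];\cX_{\rm HF})$ for a short time $T'$ to be chosen. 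The decisive structural point is that $\mathcal{U}(t)$ commutes with $(1-\Delta)^{1/4}$, so that the free flow $\gamma\mapsto\mathcal{U}(t)\gamma\mathcal{U}(t)^*$ is an isometry of $\cX_{\rm HF}$; everything thus reduces to controlling the Duhamel integrand in the $\cX_{\rm HF}$-topology.

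The heart of the matter, and what I expect to be the main obstacle, is the nonlinear estimate
\begin{equation*}
\big\|[W_\gamma,\gamma]\big\|_{\cX_{\rm HF}} \leq C\big(\|\gamma\|_{\cX_{\rm HF}}\big), \qquad \big\|[W_\gamma,\gamma]-[W_{\tilde\gamma},\tilde\gamma]\big\|_{\cX_{\rm HF}} \leq C(R)\,\|\gamma-\tilde\gamma\|_{\cX_{\rm HF}}
\end{equation*}
valid on balls $\|\gamma\|_{\cX_{\rm HF}},\|\tilde\gamma\|_{\cX_{\rm HF}}\leq R$. Since the kinetic operator only supplies $H^{1/2}$-regularity while the Newtonian singularity $-1/|x|$ in $V$ scales exactly like $(-\Delta)^{1/2}$ (Kato's inequality $|x|^{-1}\leq \tfrac\pi2 \sqrt{-\Delta}$), these bounds are \emph{critical} rather than subcritical, so there is no room to spare. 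Concretely, after commuting $(1-\Delta)^{1/4}$ through the commutator one is led to bound the multiplication operator built from $V*\rho_\gamma$ and the exchange kernel $V(x-y)\gamma(x,y)$ relatively to $(1-\Delta)^{1/2}$; for the direct term this follows from the smoothing of the Coulomb convolution together with a relativistic kinetic (Daubechies/Lieb--Thirring type) bound controlling $\rho_\gamma$ through $\tr((-\Delta)^{1/2}\gamma)$, while the decay hypotheses \eqref{assumption_w} on $w$ and $w'$ take care of the regular part of $V$. The exchange term is more delicate and must be treated by writing $\gamma=(1-\Delta)^{-1/4}\sigma(1-\Delta)^{-1/4}$ with $\sigma\in\gS_1$ and distributing the derivatives so that the singular factor always acts between two copies of $(1-\Delta)^{-1/4}$, keeping the remaining operators trace-class. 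Granting these estimates, the Banach fixed-point theorem yields a unique local solution on a maximal interval $[0,T)$ whose existence time depends only on $\|\gamma_0\|_{\cX_{\rm HF}}$; the regularity $\gamma_t\in C^1([0,T);\cX_{\rm HF}')$ is then obtained by reading $\dot\gamma_t$ off the equation.

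For the conservation laws I would exploit the commutator (isospectral) structure of the flow. With $\gamma_t$ now known, I would solve the linear equation $i\partial_t U_t = H_{\gamma_t}U_t$, $U_0=\mathrm{Id}$, for a unitary propagator $U_t$ and verify by uniqueness that $\gamma_t = U_t\gamma_0 U_t^*$. As $U_t$ is unitary, $\gamma_t$ and $\gamma_0$ are unitarily equivalent, which gives at once $0\leq\gamma_t\leq1$ (so that $\gamma_t$ genuinely remains in $\cK_{\rm HF}$) and $\tr(\gamma_t)=\tr(\gamma_0)$. For energy conservation I would use that the mean-field operator is the differential of the energy, $D\cE_{\rm HF}(\gamma)=H_\gamma$, so that formally
\begin{equation*}
\frac{d}{dt}\cE_{\rm HF}(\gamma_t) = \tr\!\big(H_{\gamma_t}\dot\gamma_t\big) = -i\,\tr\!\big(H_{\gamma_t}[H_{\gamma_t},\gamma_t]\big) = 0
\end{equation*}
by cyclicity of the trace; the regularity furnished by the fixed point, together with a mollification/approximation argument, is what makes this computation rigorous.

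Finally, the blowup alternative is the continuation statement dual to the local theory. Since for $\gamma\geq0$ one has $\|\gamma\|_{\cX_{\rm HF}}=\tr((1-\Delta)^{1/2}\gamma)$, a quantity comparable to $\tr(\gamma)+\tr((-\Delta)^{1/2}\gamma)$, and since $\tr(\gamma_t)$ is conserved, the norm $\|\gamma_t\|_{\cX_{\rm HF}}$ stays bounded on $[0,T)$ if and only if $\tr((-\Delta)^{1/2}\gamma_t)$ does. If the latter remained bounded as $t\to T^-$, I could restart the solution from times $t_0<T$ with a uniform existence time, depending only on the uniform bound for $\|\gamma_{t_0}\|_{\cX_{\rm HF}}$, and thereby extend the solution past $T$, contradicting maximality. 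Hence $T<\infty$ forces $\tr((-\Delta)^{1/2}\gamma_t)\to\infty$ as $t\to T^-$.
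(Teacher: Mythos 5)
The paper itself gives no proof of Theorem \ref{thm:LWP}: it defers to \cite{FroLen-07} (which treats $w=0$ and finite rank) together with \cite{ChaGla-75,Chadam-76}, asserting that the extension to $w\neq 0$ and infinite rank is straightforward. Your Duhamel/contraction-mapping scheme in $\cX_{\rm HF}$, with $0\leq\gamma_t\leq1$, $\tr(\gamma_t)$ and $\cE_{\rm HF}(\gamma_t)$ recovered a posteriori from the unitary propagator generated by $H_{\gamma_t}$ and the blowup alternative deduced from an existence time depending only on $\|\gamma_0\|_{\cX_{\rm HF}}$, is exactly the argument of those references; the one caveat is that the critical nonlinear estimates you ``grant'' --- in particular closing the exchange-term bound in the $H^{1/2}$-type trace norm --- are precisely where all the substantive work of the cited proofs lies.
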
 
This theorem was proved in \cite{FroLen-07} in the special case $w=0$ and for $\gamma$ with finite rank. However, it is straightforward to extend the arguments of \cite{FroLen-07} to the case $w \neq 0$ and $\gamma$ with infinite rank; see also \cite{ChaGla-75, Chadam-76}. 

\begin{remark} \label{rem:lwp} {\em
i) By using the conservation laws, we can deduce that sufficiently small solutions $\gamma_t$ extend to all time. More precisely, there exists a universal constant $C_* > 0$ such that
$$
 \tr(\gamma_0) \kappa^{3/2} < C_* 
$$
implies global-in-time existence and a priori bounds in energy space; i.\,e., we have $T=+\ii$ and that $\| \gamma_t \|_{\cX_{\rm HF}} \leq C$ for all times $t \geq0$; see \cite{LenLew-08}. Physically, this global wellposedness result  states that white dwarfs of sufficiently small mass have a well-defined global dynamics and, in particular, no gravitational collapse can occur.

ii) Particular global-in-time solutions of \eqref{eq:HF1} are \emph{stationary states} satisfying $[H_\gamma,\gamma]=0$. Important examples for stationary states are given by the minimizers of $\cE_{\rm HF}(\gamma)$, subject to the constraint $\tr(\gamma)=\lambda$ with $\lambda\kappa^{3/2}$ not too large, which were proven to exist in \cite{LenLew-08}.

iii) Well-posedness can also be shown to hold true in Sobolev-type spaces $\cH^s$ with the norm $\| \gamma \|_{\cH^s} = \norm{ (1-\Delta)^{s/2} \gamma (1-\Delta)^{s/2}}_{\gS_1}$ for $s\geq1/2$, provided the initial datum $\gamma_0$ belongs to $\cH^s$. Indeed, we will make tacitly use of this fact about this persistence of higher regularity, guaranteeing the well-definedness of our calculations below.}
\end{remark}

We now define spherically symmetric states for which we will be able to prove blowup under some assumptions:
\begin{definition}[Spherically symmetric HF states]\label{def:sph_sym_HF} We say that $\gamma\in\cK_{\rm HF}$ is \emph{spherically symmetric} when $\gamma(Rx,Ry)=\gamma(x,y)$ for all $x,y\in\R^3$ and all $R\in SO(3)$. 
\end{definition}

It is easy to verify (see Lemma \ref{lem:conservation_spherical_sym} below) that spherically symmetry of $\gamma_t$ is preserved under the flow \eqref{eq:HF1}. We also note that, if $\gamma_0$ is sufficiently regular, then the condition of spherical symmetry can also be written as the commutator condition
\begin{equation}
[\gamma_t,L]=0 .
\end{equation} 
Here $L=-ix\wedge \nabla_x$ is the angular momentum operator, and $\wedge$ denotes the cross product on $\R^3$.

\medskip
Our main result of this section is the following blowup criterion. The proof will be given in Section \ref{sec:proof_HF} below.

\begin{theorem}[Blowup in Hartree-Fock theory]\label{thm:HF} Suppose that $w$ is a radial function satisfying \eqref{assumption_w} and that $\kappa>0$. Let $\gamma_0 \in \cK_{\rm HF}$ be spherically symmetric and suppose that
\[ \tr \, |x|^4 \gamma_0 + \tr \, (-\Delta) \gamma_0 + \tr \, |L|^2 \gamma_0 < \infty \]
where $L = -i x \wedge \nabla_x$ denotes the angular momentum operator.

Our conclusion is the following: If $\gamma_0$ has sufficiently negative energy, that is 
\begin{equation}\label{eq:en-neg} \cE_{\rm HF} (\gamma_0) < -\frac{\kappa}{2} \, \big(\tr (\gamma_0)\big)^2 \, \sup_{r\geq0} |w(r)+r\,w'(r)|_- \end{equation}
(where $|f|_-\geq0$ denotes the negative part of $f$), then the corresponding solution $\gamma_t$  to (\ref{eq:HF1}) blows up in finite time; i.\,e., we have $\tr \, (-\Delta)^{1/2} \, \gamma_t \to \infty \quad \mbox{as} \quad t \to T^{-}$ for some $T < \infty$.
\end{theorem}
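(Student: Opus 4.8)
My plan is to prove blowup by a virial (dilation) argument adapted to the pseudo-relativistic kinetic operator $K=\sqrt{-\Delta+m^2}$. The natural object is the generator of dilations $D:=-\tfrac{i}{2}(x\cdot\nabla+\nabla\cdot x)$ and the functional $\mathcal{V}(t):=\tr(D\gamma_t)$. The goal is to show that $\tfrac{d}{dt}\mathcal{V}(t)$ is bounded above by a strictly negative constant under hypothesis \eqref{eq:en-neg}, so that $\mathcal{V}(t)\to-\infty$ linearly, and then to convert this into finite-time blowup of $\tr(-\Delta)^{1/2}\gamma_t$. Throughout I would work in the higher-regularity class $\cH^1$ (Remark \ref{rem:lwp}(iii)) together with the finite weighted moments $\tr|x|^4\gamma_0$, $\tr(-\Delta)\gamma_0$ and $\tr|L|^2\gamma_0$, which by persistence of regularity stay finite on $[0,T)$ and legitimize differentiating the relevant traces; spherical symmetry is preserved by the flow (Lemma \ref{lem:conservation_spherical_sym}), i.e.\ $[\gamma_t,L]=0$, and will be needed to control the nonlocal terms.

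The heart of the proof is the virial identity. From $i\dot\gamma_t=[H_{\gamma_t},\gamma_t]$ one has $\tfrac{d}{dt}\tr(D\gamma_t)=-i\,\tr([D,H_{\gamma_t}]\gamma_t)$. The kinetic commutator is $[D,K]=i\,(-\Delta)K^{-1}$, contributing $\tr((-\Delta)K^{-1}\gamma_t)$. Since $V*\rho$ is multiplication, $[D,V*\rho]=-i\,x\cdot\nabla(V*\rho)$, and symmetrizing in $x\leftrightarrow y$ turns the direct term into $-\tfrac{\kappa}{2}\iint g(|x-y|)\rho_{\gamma_t}(x)\rho_{\gamma_t}(y)$ with $g(r):=rV'(r)$; the analogous computation for the exchange operator (kernel $V(x-y)\gamma_t(x,y)$) yields $+\tfrac{\kappa}{2}\iint g(|x-y|)|\gamma_t(x,y)|^2$. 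Hence
\[ \frac{d}{dt}\tr(D\gamma_t)=\tr\!\Big(\frac{-\Delta}{K}\gamma_t\Big)-\frac{\kappa}{2}\iint g(|x-y|)\big(\rho_{\gamma_t}(x)\rho_{\gamma_t}(y)-|\gamma_t(x,y)|^2\big)\,dx\,dy. \]
Using $\tfrac{-\Delta}{K}=K-\tfrac{m^2}{K}$ and, for $V(r)=-r^{-1}+w(r)$, the identity $g(r)+V(r)=(rV)'(r)=w(r)+rw'(r)$, the right-hand side is recognized as the conserved energy minus two remainders:
\[ \frac{d}{dt}\tr(D\gamma_t)=\cE_{\rm HF}(\gamma_0)-\tr\!\Big(\frac{m^2}{K}\gamma_t\Big)-\frac{\kappa}{2}\iint\big(w+rw'\big)(|x-y|)\big(\rho_{\gamma_t}\rho_{\gamma_t}-|\gamma_t|^2\big). \]
The mass term $-\tr(m^2K^{-1}\gamma_t)\leq0$ is favorable and discarded. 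For the $w$-remainder I use the pointwise positivity $\rho_\gamma(x)\rho_\gamma(y)-|\gamma(x,y)|^2\geq0$ (Cauchy-Schwarz for density matrices) and the mass bound $\iint(\rho\rho-|\gamma|^2)=(\tr\gamma)^2-\tr(\gamma^2)\leq(\tr\gamma_0)^2$, giving $-\tfrac{\kappa}{2}\iint(w+rw')(\cdots)\leq\tfrac{\kappa}{2}(\tr\gamma_0)^2\sup_{r\geq0}|w+rw'|_-$. With \eqref{eq:en-neg} this produces $\tfrac{d}{dt}\tr(D\gamma_t)\leq-\delta$ for some $\delta>0$ on all of $[0,T)$.

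It then remains to extract blowup from $\tr(D\gamma_t)\leq\tr(D\gamma_0)-\delta t\to-\infty$. The aim is to combine a weighted uncertainty/Cauchy-Schwarz bound $|\tr(D\gamma_t)|^2\leq C\,\tr(|x|^2\gamma_t)\,\tr((-\Delta)\gamma_t)$ with an a priori control of the spatial spread: differentiating $\tr(|x|^2\gamma_t)$ (where the potential terms drop by the antisymmetry of $|x|^2-|y|^2$ and only $[|x|^2,K]$ survives) gives $|\tfrac{d}{dt}\tr(|x|^2\gamma_t)|\leq 2\sqrt{\tr\gamma_0}\,\sqrt{\tr(|x|^2\gamma_t)}$, so $\tr(|x|^2\gamma_t)$ grows at most polynomially; the linear divergence of $\tr(D\gamma_t)$ then forces $\tr((-\Delta)\gamma_t)$, hence $\tr(-\Delta)^{1/2}\gamma_t$, to become unbounded at a finite $T$. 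I expect the genuine difficulties to be twofold, both rooted in the nonlocality of $K$. First, making the identity and these estimates rigorous requires controlling the nonlocal commutators $[D,K]$, $[|x|^2,K]$ and their pairings against the mean-field potential; this is where the finiteness of $\tr|x|^4\gamma_0$ and $\tr(-\Delta)\gamma_0$, and crucially the spherical symmetry with $\tr|L|^2\gamma_0<\infty$, enter, taming the angular part of these pseudodifferential commutators — presumably the content of Appendix A. Second, since both $\tr(|x|^2\gamma_t)$ and $\tr(D\gamma_t)$ scale consistently under the $L^2$-critical dilation, balancing the polynomial moment growth against the linear decay of $\tr(D\gamma_t)$ is delicate, and the sharp form of the weighted uncertainty inequality together with precise propagation estimates for $\tr(|x|^2\gamma_t)$ and $\tr(|x|^4\gamma_t)$ is the quantitative crux.
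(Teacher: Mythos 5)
Your first-order virial computation is correct and coincides (up to the normalization $A=2D$) with Step~3 of the paper's proof: one does get
$\frac{d}{dt}\tr(D\gamma_t)\leq \cE_{\rm HF}(\gamma_0)+\tfrac{\kappa}{2}(\tr\gamma_0)^2\sup_{r\geq0}|w+rw'|_-<0$
under \eqref{eq:en-neg}. The genuine gap is in your final step, and it is fatal to the route as you describe it. From $\tr(D\gamma_t)\leq -\delta t+C$, the Cauchy--Schwarz bound $|\tr(D\gamma_t)|^2\leq C\,\tr(|x|^2\gamma_t)\,\tr((-\Delta)\gamma_t)$ together with your (correct) propagation estimate $\tr(|x|^2\gamma_t)\leq C(1+t)^2$ yields only
\[
\tr\big((-\Delta)\gamma_t\big)\;\geq\;\frac{(\delta t-C)^2}{C(1+t)^2}\;\xrightarrow[t\to\infty]{}\;\frac{\delta^2}{C}>0,
\]
i.e.\ a \emph{bounded} lower bound on the kinetic energy, not divergence. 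This is exactly the scaling obstruction you flag at the end but do not resolve: the linear decay of $\tr(D\gamma_t)$ is precisely cancelled by the linear growth of $\tr(|x|^2\gamma_t)^{1/2}$, so no contradiction and no blowup can be extracted this way. In the nonrelativistic mass-critical setting one escapes because $-i[|x|^2,-\Delta]=2A$, so $\tr(|x|^2\gamma_t)$ has a strictly concave second-derivative identity and must become negative; but for $K=\sqrt{-\Delta+m^2}$ one has $-i[|x|^2,K]=x\cdot\frac{p}{K}+\frac{p}{K}\cdot x\neq A$, and $|x|^2$ admits no such identity.

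The paper's resolution is to replace $|x|^2$ by the nonnegative operator $M=\sum_{i=1}^3 x_i\sqrt{-\Delta+m^2}\,x_i$, for which $-i[M,K]=A$ \emph{exactly}. One then integrates twice: $\frac{d}{dt}\tr(M\gamma_t)\leq\tr(A\gamma_t)+C\,\tr(\gamma_0)\tr((1+|L|^2)\gamma_0)$ and $\frac{d}{dt}\tr(A\gamma_t)\leq 2\cE_{\rm HF}(\gamma_0)+\kappa(\tr\gamma_0)^2\sup|w+rw'|_-<0$, so $0\leq\tr(M\gamma_t)$ is dominated by a downward parabola, forcing $T<\infty$. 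Note also that you have misplaced where the hypotheses $\tr|L|^2\gamma_0<\infty$ and the spherical symmetry really enter: the commutators $[D,K]$ and $[|x|^2,K]$ are exact, bounded pseudodifferential identities needing no angular information, whereas the hard terms are the commutators of $M$ with the \emph{direct and exchange potentials}, $[M,V*\rho_{\gamma}]$ and $[M,R_\gamma]$. Controlling the exchange commutator for infinite-rank $\gamma$ is the technical core of the paper (the spherical-harmonics expansion, the estimates on $F_{\ell,\ell'}$, the commutator bound for $K_\ell$, and the bound $\|(K_\ell-K_{\ell'})r\|\leq C(1+\ell+\ell')|\ell-\ell'|$ proved in Appendix~A), and it is precisely there that the factor $\tr((1+|L|^2)\gamma_0)$ and the radial symmetry are consumed. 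Your scheme never encounters these terms because it never differentiates a weighted second moment of the relativistic type, which is also why it cannot close.
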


\begin{remark} {\em By scaling, it is simple to show the existence of $\gamma_0 \in \cK_{\rm HF}$ satisfying the condition (\ref{eq:en-neg}) for sufficiently large coupling constants $\kappa$, or equivalently for a sufficiently high particle number $\tr(\gamma_0)$. Note also that $\tr(\gamma_0)\kappa^{3/2}$ must be sufficiently large, since otherwise $T=+\ii$ holds, as mentioned in Remark \ref{rem:lwp} above.}
\end{remark}

Regarding the proof of Theorem \ref{thm:HF}, we remark that we follow a virial-type argument developed in \cite{FroLen-07b,FroLen-07} for the Hartree equation, which was recently extended in \cite{HaiSch-09} to Hartree-Fock equations, by using an additional conversation law for the square of the angular momentum $\tr(|L|^2 \gamma_t)$ for radial solutions. More specifically, it was shown in \cite{HaiSch-09} that spherically symmetric solutions $\gamma_t$ of the Hartree-Fock equation \eqref{eq:HF1} with $V=-1/|x|$ satisfy the inequality
\begin{equation}
\tr(M \gamma_t) \leq 2 \mathcal{E}_{\rm HF}(\gamma_0) + C \tr(\gamma_0) \tr ( (1+|L|^2 \gamma_0) t + C,
\end{equation}
provided that $\gamma_0$ is finite-rank. Since the relativistic ``virial'' operator 
$$M  = \sum_{j=1}^3 x_i \sqrt{-\Delta + m^2} x_i$$ 
is nonnegative, this yields a finite maximal time of existence $T < \infty$ and hence the desired blowup result. In the arguments developed below, we remove the finite-rank condition, by expanding the solution $\gamma_t$ in terms of sectors of angular momentum. This approach is also essential when addressing the blowup for HFB theory, as done in the next section. However, the conservation law for $\tr(|L|^2 \gamma_t)$, which plays an essential role in the argument sketched above, is no longer at our disposal in HFB theory, as will be detailed below.

\section{Blowup in Hartree-Fock-Bogoliubov theory} \label{sec:HFB}
In this section, we now turn to the Hartree-Fock-Bogoliubov (HFB) model, which generalizes the well-known Hartree-Fock model studied in the previous section. It is fair to say that HFB theory is a widely used tool for (partially) attractive quantum systems, which are found -- for example -- in nuclear and stellar physics; see \cite{RinSch-80,BenHeeRei-03,DeaHjo-03} for a detailed exposition. As a notable fact, we mention that HFB yields (in some limiting approximation) the well-known Bardeen-Cooper-Schrieffer (BCS) theory, which plays a fundamental role for the understanding of superconductivity and superfluidity. 

The outline of this section is briefly as follows. First, we introduce the HFB energy and its associated time-dependent equations. Second, we point out the key difficulties in the blowup analysis encountered within HFB theory. Finally, we state our main result on finite-time blowup for the HFB model with an angular momentum cutoff; see Theorem \ref{thm:HFB_cut_off} below.

\subsection{HFB Theory: Preliminaries and Difficulties}

\subsubsection{HFB Energy}
The HFB model involves two variables: the same one-body density matrix $\gamma$ as in HF theory, and an additional two-body fermionic wavefunction $\alpha(x,y)\in L^2(\R^3;\C)\wedge L^2(\R^3;\C)$\footnote{Here $\wedge$ denotes the antisymmetric tensor product.} describing the so-called \emph{Cooper pairs}. The amount of Cooper pairing is given by  $\iint|\alpha(x,y)|^2dx\,dy$; and it must vanish when $\gamma^2=\gamma$ is a pure Hartree-Fock state. For simplicity, we will always use the same notation for the two-body wavefunction $\alpha$ and the corresponding (non self-adjoint but compact) operator acting on the one-body space $L^2(\R^3;\C)$, whose integral kernel is the function $(x,y)\mapsto\alpha(x,y)$. Note that the assumption that $\alpha$ is fermionic can then be expressed as $\alpha^T=-\alpha$. We remind the reader that again we have neglected any spin degrees of freedom to simplify our presentation. 

Within the HFB framework, the operators $\gamma$ and $\alpha$ are, by assumption, linked through the following operator constraint:
\begin{equation}
\left(\begin{matrix}
0 & 0\\
0 &  0
\end{matrix}\right)\leq \left(\begin{matrix}
\gamma & \alpha\\
\alpha^* &  1-\overline{\gamma}
\end{matrix}\right)\leq \left(\begin{matrix}
1 & 0\\
0 &  1
\end{matrix}\right)\quad\text{on } L^2(\R^3;\C)\oplus L^2(\R^3;\C).
\label{constraint_2}
\end{equation}
In fact, this inequality guarantees that the pair $(\gamma,\alpha)$ is associated with a unique quasi-free state in Fock space; see \cite{BacLieSol-94}. The average number of particles is given (as in HF theory) by the trace $\tr(\gamma)$.

The HFB energy functional, for a system interacting through a radial two-body potential $V$, is given by
\begin{equation}
\cE_{\rm HFB}(\gamma,\alpha) :=\cE_{\rm HF}(\gamma)+\frac{\kappa}{2} \iint_{\R^3\times\R^3}{|\alpha(x,y)|^2}V(|x-y|) \, dx\,dy.
\label{def:energy_HFB}
\end{equation}
It is then obvious that a ground state (if it exists) always has $\alpha\equiv0$ in the case of a purely repulsive potential $V\geq0$. Therefore, the use of HFB theory is only meaningful when $V$ is not everywhere positive. In fact, in this case, it is often observed that there is a phase transition from an HF ground state to an HFB ground state when the coupling constant $\kappa$ is increased.

As in the previous section, we consider radial potentials $V=V(|x|)$ of the form \eqref{form_V} above. Furthermore, we remark that the HFB energy $\cE_{\rm HFB}(\gamma, \alpha)$ can be appropriately defined (see \cite{LenLew-08}) on the set of generalized density matrices:
\begin{equation}
\cK_{\rm HFB} =  \left\{(\gamma,\alpha) \in \cX_{\rm HFB}\ :\ \left(\begin{matrix}
0&0\\ 0&0\end{matrix}\right) \leq \left(\begin{matrix}
\gamma&\alpha\\ \alpha^*&1-\overline{\gamma}\end{matrix}\right)\leq\left(\begin{matrix}
1&0\\ 0&1\end{matrix}\right) \right\}.
\end{equation}
The space $\cX_{\rm HFB}$ is the correct energy space for generalized density matrices, which is defined as 
\begin{equation}
\cX_{\rm HFB} = \big  \{(\gamma,\alpha) \in \gS_1\times\gS_2 \ : \ \gamma^* = \gamma,\ \alpha^T=-\alpha,\ \|(\gamma,\alpha) \|_{\cX_{\rm HFB}}<\ii\big\},
\label{def_set_X}
\end{equation}
and is equipped with the norm
\begin{equation}
\|(\gamma,\alpha) \|_{\cX_{\rm HFB}} = \big \| (1-\Delta)^{1/4} \gamma (1-\Delta)^{1/4} \big\|_{\gS_1} + \big \| (1-\Delta)^{1/4} \alpha \big \|_{\gS_2} .
\end{equation}
We remind the reader that $\gS_1$ and $\gS_2$ denote the space of trace-class and Hilbert-Schmidt operators on $L^2(\R^3; \C)$, respectively. Also, note that $\| (1-\Delta)^{1/4} \alpha \|_{\gS_2}$ is equivalent to the Sobolev norm $\| \alpha(\cdot, \cdot) \|_{H^{1/2}(\R^3 \times \R^3)}$ where $\alpha(x,y)$ is regarded as a two-body wavefunction defined on $\R^3 \times \R^3$. 

The existence of minimizers for the HFB energy $\cE_{\rm HFB}(\gamma, \alpha)$, under the constraint that $\tr\gamma=\lambda$ for $\lambda \kappa^{3/2}$ not too large, was given in \cite{LenLew-08}. Moreover, some important properties were derived in \cite{BacFroJon-08,LenLew-08}. We note it is not known whether a ground state always has $\alpha\neq0$ for $V$ taking the form \eqref{form_V}, but this is expected to hold true (at least for sufficiently large local potentials $w$).

In view of \eqref{constraint_2} and following \cite{BacLieSol-94,LenLew-08}, we introduce a generalized one-body density matrix
\begin{equation}
\Gamma=\left(\begin{matrix}
\gamma&\alpha\\ \alpha^*&1-\overline{\gamma}\end{matrix}\right), 
\label{def_GAMMA}
\end{equation}
as well as the corresponding \emph{HFB mean-field operator} 
\begin{equation}
F_{\Gamma}:= \left(\begin{matrix}
H_\gamma & \Pi_\alpha\\
\Pi_\alpha^* & -\overline{H_\gamma}\\
\end{matrix}\right)
\label{def_H_SCF2}
\end{equation}
which acts on $L^2(\R^3; \C)\oplus L^2(\R^3; \C)$, with $\overline{H_\gamma}$ denoting the conjugate of $H_\gamma$. Here 
\begin{equation}
H_\gamma:= K+\kappa(V\ast\rho_\gamma)(x)-\kappa R_\gamma(x,y)
\label{def_H_SCF}
\end{equation}
denotes the Hartree-Fock mean-field operator introduced before in \eqref{def:mean-field}; and $\Pi_\alpha$ is given by its kernel
$$\Pi_\alpha(x,y)=\kappa V(x-y)\alpha(x,y).$$
Moreover, it turns out to be convenient to define the number operator as
\begin{equation}
\cN=\left(\begin{matrix}
1&0\\ 0&-1\end{matrix}\right).
\label{def_N}
\end{equation}
Note that $\Gamma$ commutes with $\cN$ if and only if $\alpha=0$, i.\,e., if and only if the corresponding quasi-free state in Fock space also commutes with the number operator $\cN$; see, e.\,g., \cite{BacLieSol-94}. 

\begin{remark} {\em
It was shown in \cite{BacLieSol-94,LenLew-08} that any minimizer of the HFB energy \eqref{def:energy_HFB} solves the following self-consistent equation:
 \begin{equation}
\Gamma=\chi_{(-\ii,0)}\left(F_\Gamma-\mu \cN\right)+D ,
\label{SCF_GAMMA} 
\end{equation}
for some Lagrange multiplier $\mu<0$ chosen to ensure the constraint $\tr(\gamma)=\lambda$, and where $D$ is a finite rank operator of the same matrix form as $\Gamma$, satisfying ${\rm ran} (D)\subset \ker (F_\Gamma-\mu \cN)$. It was also shown in \cite{LenLew-08,BacFroJon-08} that if $\alpha\neq0$ and $w \equiv0$, then $\gamma$ must be infinite rank.
}
\end{remark}

\subsubsection{The HFB evolution equation}
Let us now turn to the time-dependent HFB theory. This evolution equation can be concisely written in terms of the generalized density matrix $\Gamma=\Gamma(\gamma,\alpha)$ introduced in \eqref{def_GAMMA}. The corresponding initial-value problem reads as follows:
\begin{equation}
\boxed{\left\{\begin{array}{l}
\displaystyle i\frac{d}{dt}\Gamma_t=\left[F_{\Gamma_t}\,,\, \Gamma_t\right] \\[1ex]
(\gamma_{|t=0},\alpha_{|t=0})=(\gamma_0,\alpha_0)\in\cK_{\rm HFB}.
\end{array}\right.}
\label{eq:time-dep_HFB}
\end{equation}
This evolution equation can also be regarded as obtained from ``forcing'' the many-body Schr\"odinger evolution in Fock space to stay on the manifold of HFB states, by means of the Dirac-Frenkel principle in quantum mechanics. With regard to the wellposedness of its initial-value problem, we note that a straightforward adaptation of \cite{ChaGla-75,Chadam-76,BovPraFan-76,FroLen-07,HaiLewSpa-05,Lenzmann-07} yields the existence and uniqueness of a maximal solution. We record the wellposedness fact as follows.
\begin{theorem}[Well-posedness in Hartree-Fock-Bogoliubov Theory]
Assume that $w$ is a radial function satisfying \eqref{assumption_w} and that $0\leq\kappa<4/\pi$ holds. For each initial datum $(\gamma_0, \alpha_0) \in \cK_{\rm HFB}$, there exists a unique solution $(\gamma_t,\alpha_t) \in C^0([0,T); \cK_{\rm HFB}) \cap C^1([0,T); \cX_{\rm HFB}')$ solving \eqref{eq:time-dep_HFB} with maximal time of existence $0 < T \leq \infty$. Moreover, we have conservation of  energy and expected number of particles, i.\,e.,
\[
 \cE(\gamma_t,\alpha_t) = \cE(\gamma_0, \alpha_0) \quad \mbox{and} \quad \tr (\gamma_t) = \tr(\gamma_0) \qquad \mbox{for} \quad 0 \leq t < T,
\]
as well as the a priori bound for the pairing wave function:
\[
\tr(\alpha_t^* \alpha_t) \leq C \qquad \mbox{for} \quad 0 \leq t < T.
\]
Finally, if $T<\ii$ holds, then we have $\tr(-\Delta)^{1/2}\gamma_t\to\ii$ as $t\to T^-$.
\end{theorem}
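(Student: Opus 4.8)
The plan is to recast the initial-value problem \eqref{eq:time-dep_HFB} as a Duhamel fixed-point equation in the energy space $\cX_{\rm HFB}$, run a contraction argument for local existence and uniqueness, and then promote the resulting $\cX_{\rm HFB}$-norm continuation alternative to the stated criterion on $\tr(-\Delta)^{1/2}\gamma_t$ by exploiting the operator constraint. First I would split the generator as $F_\Gamma=\mathcal{K}+\mathcal{W}(\Gamma)$, where $\mathcal{K}=\left(\begin{smallmatrix}K&0\\0&-K\end{smallmatrix}\right)$ is the free relativistic part and $\mathcal{W}(\Gamma)$ collects the direct potential $\kappa(V*\rho_\gamma)$, the exchange term $-\kappa R_\gamma$ and the pairing block $\Pi_\alpha$. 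Writing $\mathcal{U}_t=e^{-it\mathcal{K}}$ for the free unitary group, \eqref{eq:time-dep_HFB} is equivalent to the mild formulation
\begin{equation*}
\Gamma_t = \mathcal{U}_t\,\Gamma_0\,\mathcal{U}_t^* - i\int_0^t \mathcal{U}_{t-s}\,\big[\mathcal{W}(\Gamma_s),\Gamma_s\big]\,\mathcal{U}_{t-s}^*\,ds .
\end{equation*}
I would then set up a contraction in a ball of $C^0([0,T_0];\cX_{\rm HFB})$ with $T_0$ depending only on $\|(\gamma_0,\alpha_0)\|_{\cX_{\rm HFB}}$; the unique fixed point is the local solution, the equation itself places $\Gamma_t$ in $C^1$ into the dual $\cX_{\rm HFB}'$, and the usual continuation argument produces a maximal time $T$ with the alternative $\limsup_{t\to T^-}\|(\gamma_t,\alpha_t)\|_{\cX_{\rm HFB}}=\infty$ whenever $T<\ii$.

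The analytic heart is the contraction estimate: the nonlinear map $\Gamma\mapsto[\mathcal{W}(\Gamma),\Gamma]$ must be shown to be locally Lipschitz on $\cX_{\rm HFB}$ in a suitable mild sense. The direct and exchange terms are handled exactly as in the Hartree--Fock analysis of \cite{FroLen-07}, using Hardy--Littlewood--Sobolev together with the Kato--Herbst inequality $|x|^{-1}\leq(\pi/2)\sqrt{-\Delta}$, with the smooth tail $w$ rendered harmless by \eqref{assumption_w}. The genuinely new object is the pairing block $\Pi_\alpha(x,y)=\kappa V(x-y)\alpha(x,y)$, whose Coulomb singularity must be controlled in the Hilbert--Schmidt topology of $\alpha$. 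Here the hypothesis $\kappa<4/\pi$ is precisely the Kato--Herbst threshold ensuring that the attractive Coulomb part of $\Pi_\alpha$ is dominated by $\mathcal{K}$ with relative bound strictly below one, so that $F_\Gamma$ is self-adjoint on $D(\mathcal{K})$ and the iteration map stabilizes a ball in $\cX_{\rm HFB}$. I expect closing these pairing estimates with the sharp constant to be the main obstacle, as it has no analogue in the HF setting of the previous section.

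Next I would establish the conservation laws and the constraint. Conservation of $\cE_{\rm HFB}(\gamma_t,\alpha_t)$ and of $\tr(\gamma_t)$ follow by differentiating along the flow, using the commutator structure of \eqref{eq:time-dep_HFB} and cyclicity of the trace; these formal computations are justified by the persistence of higher regularity recorded in Remark \ref{rem:lwp}. The constraint $0\leq\Gamma_t\leq1$ is preserved because, $F_{\Gamma_t}$ being self-adjoint, the solution is $\Gamma_t=\mathcal{V}_t\,\Gamma_0\,\mathcal{V}_t^*$ for the unitary propagator $\mathcal{V}_t$ generated by $F_{\Gamma_t}$; hence $(\gamma_t,\alpha_t)$ remains in $\cK_{\rm HFB}$. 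In particular $\Gamma_t^2\leq\Gamma_t$, and compressing to the first component yields the operator inequality $\alpha_t\alpha_t^*\leq\gamma_t-\gamma_t^2\leq\gamma_t$; taking the trace gives the a priori bound $\tr(\alpha_t^*\alpha_t)=\tr(\alpha_t\alpha_t^*)\leq\tr(\gamma_t)=\tr(\gamma_0)$.

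Finally, the stated blowup criterion follows from the $\cX_{\rm HFB}$-norm alternative, and notably requires neither energy conservation nor $\kappa<4/\pi$. Using $\alpha_t\alpha_t^*\leq\gamma_t$ with the positivity of $(1-\Delta)^{1/2}$ and the pointwise bound $(1-\Delta)^{1/2}\leq(-\Delta)^{1/2}+1$, one obtains
\begin{equation*}
\norm{(1-\Delta)^{1/4}\alpha_t}_{\gS_2}^2 = \tr\!\big((1-\Delta)^{1/2}\alpha_t\alpha_t^*\big) \leq \tr\!\big((1-\Delta)^{1/2}\gamma_t\big) \leq \tr(-\Delta)^{1/2}\gamma_t+\tr(\gamma_0),
\end{equation*}
while $\|(1-\Delta)^{1/4}\gamma_t(1-\Delta)^{1/4}\|_{\gS_1}=\tr((1-\Delta)^{1/2}\gamma_t)\leq\tr(-\Delta)^{1/2}\gamma_t+\tr(\gamma_0)$ by the same comparison. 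Thus $\|(\gamma_t,\alpha_t)\|_{\cX_{\rm HFB}}$ is bounded by a function of $\tr(-\Delta)^{1/2}\gamma_t$ and the conserved particle number alone. Were $\tr(-\Delta)^{1/2}\gamma_t$ to stay bounded on $[0,T)$ with $T<\ii$, the $\cX_{\rm HFB}$-norm would remain bounded, contradicting the continuation alternative; hence $T<\ii$ forces $\tr(-\Delta)^{1/2}\gamma_t\to\ii$ as $t\to T^-$.
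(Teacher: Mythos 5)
Your proposal is correct and follows essentially the route the paper intends: the paper gives no detailed proof, deferring to a ``straightforward adaptation'' of the cited fixed-point/contraction arguments (with $\kappa<4/\pi$ playing exactly the Kato--Herbst role you identify for the pairing block), and it derives the a priori bound on $\tr(\alpha_t^*\alpha_t)$ from the same operator inequality $\gamma^2+\alpha\alpha^*\leq\gamma$ and conservation of $\tr(\gamma_t)$ that you use. Your additional observation that the blowup alternative in terms of $\tr(-\Delta)^{1/2}\gamma_t$ also controls the $\alpha$-part of the $\cX_{\rm HFB}$-norm via $\alpha_t\alpha_t^*\leq\gamma_t$ is a correct and worthwhile detail that the paper leaves implicit.
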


\begin{remarks} {\em
i) It was shown in \cite{LenLew-08} that when $\tr(\gamma_0)\kappa^{3/2}$ is sufficiently small, then one has $T=+\ii$ and $(\gamma_t,\alpha_t)$ is bounded in $\cX_{\rm HFB}$.

ii) Note the condition $\kappa < 4/\pi$ as opposed to HF theory; see Section \ref{sec:ang:HFB} for more details on this.

iii) It is straightforward to verify that if the generalized matrix $\Gamma_0$ associated with the initial condition $(\gamma_0,\alpha_0)$ is an orthogonal projector on $L^2(\R^3;\C)\oplus L^2(\R^3;\C)$, then so is $\Gamma_t$ for all times $t\in[0,T)$. Similarly, if $\alpha_0=0$ (i.\,e., the initial datum is a Hartree-Fock state) then we have $\alpha_t\equiv0$ for all time. Therefore the equation \eqref{eq:time-dep_HFB} is a generalization of the HF evolution equation \eqref{eq:HF1}.

iv) The {\em a priori} control on $\tr(\alpha^*_t \alpha_t)$ simply follows from the operator inequality $\gamma^2 + \alpha^* \alpha \leq \gamma$ (since $\Gamma^2 \leq \Gamma$) and the conservation of $\tr(\gamma)$. See Section \ref{sec:ang:HFB} for more details on the time evolution of $\alpha$.

v) As for stationary solutions of \eqref{eq:HF2}, we remark that it can be verified that if $\Gamma_0$ satisfies $[F_{\Gamma_0}-\mu\cN,\Gamma_0]=0$ for some $\mu$, then $\Gamma(t)=e^{-i\mu \cN t}\,\Gamma_0\,e^{i\mu \cN t}$ is solution of equation \eqref{eq:time-dep_HFB}. In particular, we see that HFB minimizers give rise to stationary solutions of \eqref{eq:time-dep_HFB}, as one naturally expects. 
}
\end{remarks}

\begin{remark}[Spin symmetry]\label{rmk:spin_symmetry}{\em
There is a very natural spin symmetry in HFB theory, which was already used before in minimization problems for the Hubbard model in \cite{BacLieSol-94} and for general systems with an interaction potential of negative type in \cite{BacFroJon-08}. In the case of the Newtonian interaction, this was used in \cite{LenLew-08}.

Let us assume only in this remark that the problem is now posed in $L^2(\R^3;\C^2)$ and let us introduce the unitary operators for $k=1,2,3$
$$\Sigma_k=\left(\begin{matrix}
i\sigma_k & 0\\ 0 & \overline{i\sigma_k }
\end{matrix}\right)=i\left(\begin{matrix}
\sigma_k & 0\\ 0 & -\overline{\sigma_k }
\end{matrix}\right)$$
where $\sigma_k$ are the Pauli matrices.
A simple calculation shows that $[\Sigma_k,\Gamma]=0$ is equivalent to $[\sigma_k,\gamma]=0$ and $\sigma_k \alpha+\alpha\overline{\sigma_k}=0$. It can then be checked\footnote{The condition $\Sigma_k\Gamma\Sigma_k^*=\Gamma$ for $k=1,2,3$ is indeed equivalent to $\gamma_{\uparrow\downarrow}=\gamma_{\downarrow\uparrow}=0$, $\gamma_{\uparrow\uparrow}=\gamma_{\downarrow\downarrow}$ and $\alpha\equiv0$, as may be seen by a simple computation. We do not want to impose such a condition.} that $[\Sigma_k,\Gamma]=0$ for $k=1,2$ if and only if 
$\gamma_{\uparrow\downarrow}=\gamma_{\downarrow\uparrow}=0$, $\gamma_{\uparrow\uparrow}=\gamma_{\downarrow\downarrow}$, $\alpha_{\uparrow\uparrow}=\alpha_{\downarrow\downarrow}=0$ and $\alpha_{\downarrow\uparrow}=-\alpha_{\uparrow\downarrow}$.

Let us now assume that our initial state $\Gamma_0$ takes the special form
\begin{equation}
\Gamma_0=\left(\begin{matrix}
\gamma_0 & \alpha_0\\
\alpha_0^* & 1-\overline{\gamma}_0
\end{matrix}\right)\quad\text{with}\quad \gamma_0=\tau_0\otimes\left(\begin{matrix}
1 & 0\\ 0 & 1\end{matrix}\right)\quad\text{and}\quad \alpha_0=a_0\otimes\left(\begin{matrix}
0 & 1\\ -1 & 0\end{matrix}\right)
\label{general_form_spin_Gamma} 
\end{equation}
where $a_0$ and $\tau_0$ act on $L^2(\R^3;\C)$ with $\tau_0
^*=\tau_0$ and $a_0^T=a_0$. If $w$ is a multiple of the $2\times2$ identity matrix,  then both the kinetic energy $K$ and our interaction potential do not depend on the spin. It is then clear that we have $\Sigma_kF_\Gamma\Sigma_k=F_{\Sigma_k\Gamma\Sigma_k^*}$ for $k=1,2,3$. Hence we see that $\Sigma_k\Gamma_t\Sigma_k^*\in\cK_{\rm HFB}$ solves the same time-dependent equation as $\Gamma_t$. By uniqueness we deduce that $\Gamma_t$ has the same form as $\Gamma_0$ for all times. Therefore if one seeks for solutions of the above special form, one can reduce the spin-$1/2$ case to a no-spin model with the condition $\alpha^T=-\alpha$ replaced by a singlet-type condition $a^T=a$. All our results also hold true under this constraint. A similar reduction can be done for a system with $q \geq 1$ degrees of freedom.

The special form \eqref{general_form_spin_Gamma} is sometimes called \emph{time-reversal symmetry} as the time reversal symmetry operators are usually defined through $T_k=e^{i\sigma_k\pi/2}K=\Sigma_kK$, where $K$ is the conjugation (anti-linear) operator. Indeed, one verifies that $T_k\Gamma_tT_k^{-1}$ solves the backward time-dependent equation. }
\end{remark}

\subsubsection{HFB: The Difficulties} \label{sec:ang:HFB}
In the previous section we have studied the blowup of solutions in the Hartree-Fock case. As we will explain now, the HFB generalization brings in new substantial difficulties as follows.
\begin{itemize}
\item The pairing term coming from $\alpha_t$ is difficult to control.
\item Powers of the angular momentum $\tr(|L|^s \gamma_t)$ are generally {\em not conserved} along the HFB flow, even for spherical symmetric solutions $(\gamma_t, \alpha_t)$ defined below. This is a striking contrast to HF theory. 
\end{itemize}
It is enlighting to rewrite the time-dependent equation \eqref{eq:time-dep_HFB} as a coupled system with a von Neumann-type equation for the one-body density matrix $\gamma_t$ and a two-body Schr\"odinger equation for the pairing wavefunction $\alpha_t$. In fact, a simple calculation yields the following system:
\begin{equation}
 \boxed{\left\{\begin{array}{rl}
\displaystyle i\,\frac{d}{dt}\gamma_t &= \left[H_{\gamma_t}\, ,\, \gamma_t\right] + iG_{\alpha_t},\\
\displaystyle i\, \frac{d}{dt}\alpha_t(x,y) & = \displaystyle  \bigg(\left(H_{\gamma_t}\right)_x+\left(H_{\gamma_t}\right)_y+\kappa V(|x-y|)\\
& \displaystyle \qquad\qquad\qquad-\kappa\big((\gamma_t)_x+(\gamma_t)_y\big)V(|x-y|)\bigg)\,\alpha_t(x,y).
\end{array}\right.}
\label{eq:system_TD_HFB}
\end{equation}
The time-dependent equation for $\gamma_t$ contains the trace-class coupling term $G_\alpha$ arising from the pairing wavefunction $\alpha$. By an elementary calculation, this term is found to be
$G_\alpha =i(\Pi_\alpha\alpha^*-\alpha\Pi_\alpha^*)$ or, in terms of its integral kernel,
$$G_\alpha(x,y)=i\,\kappa\int_{\R^3}\alpha(x,z)\overline{\alpha(y,z)}\left(V(y-z)-V(x-z)\right)\,dz.$$
Note that $ \tr(\gamma_t)$ is constant along the trajectories, because of $\tr(G_\alpha)=0$.

It is fair to say that the two-body wavefunction $\alpha_t$ evolves through a time-dependent two-body Schr\"odinger equation with Hamiltonian
\begin{equation}
\left(H_{\gamma_t}\right)_x+\left(H_{\gamma_t}\right)_y+\kappa V(|x-y|)-\kappa\big((\gamma_t)_x+(\gamma_t)_y\big)V(|x-y|)
\label{def:Hamil_alpha}
\end{equation}
Here, the first two terms in this Hamiltonian form a one-body operator arising from the mean-field operator $H_\gamma$, which itself contains the potentials induced by $\gamma$ as external sources. The third term $\kappa V(|x-y|)$ is a usual two-body Newtonian interaction term (perturbed by our local interaction $w$). The last term 
$\kappa\big((\gamma_t)_x+(\gamma_t)_y\big)V(|x-y|)$ in \eqref{def:Hamil_alpha} is a non-self-adjoint (but compact) operator 
which is responsible for the fact that $\alpha_t$ does not have a constant $L^2$ norm along a given trajectory (except of course if $\alpha_t\equiv0$). We note that since $0\leq \kappa<4/\pi$, the two-body operator \eqref{def:Hamil_alpha} is actually equal to a well-defined self-adjoint operator, plus a compact non-self-adjoint operator. Therefore, the two-body equation for $\alpha_t(x,y)$ is well-defined.

\medskip

Let us now detail the behavior of moments of angular momentum along the HFB flow. We recall for the reader's orientation that, in HF theory, higher moments of angular momentum are conserved, provided the initial state is spherically symmetric. This fact is one of the main tools used in blowup proof for HF, as we have already seen. First, we define spherically symmetric HFB states in a similar fashion as in the HF case.

\begin{definition}[Spherically symmetric HFB states]\label{def:sph_sym_HFB} We say that $(\gamma,\alpha)\in\cK_{\rm HFB}$ is \emph{spherically symmetric} when $\gamma(Rx,Ry)=\gamma(x,y)$ and $\alpha(Rx,Ry)=\alpha(x,y)$ for all $x,y\in\R^3$ and all $R\in SO(3)$. 
\end{definition}

When the HFB initial datum $(\gamma_0,\alpha_0)$ is spherically symmetric, it can easily be seen that $(\gamma_t,\alpha_t)$ stays spherically symmetric on its time interval of existence. In terms of the angular momentum operator (provided that $\gamma$ and $\alpha$ are smooth enough) the condition of spherical symmetry may also be written as 
\begin{equation}
[L,\gamma_t]=0 \quad \mbox{and}  \quad (L_x+L_y)\alpha_t(x,y)=0.
\end{equation}
Moreover, note also that we find that $[|L|^2,\gamma_t]=0$ holds. However, we do {\em not} obtain any other information about the two-body wavefunction $\alpha_t$. In particular, we do \emph{not} have that $(|L_x|^2+|L_y|^2)\alpha_t(x,y)=0$, which would mean that $\alpha_t(Rx,y)=\alpha_t(x,Ry)=\alpha_t(x,y)$ for all rotations $R\in SO(3)$. 

Let us elaborate more explicitly the fact that $\tr (|L|^2 \gamma_t)$ is not conserved for spherically symmetric solutions in HFB theory. To this end, we calculate the variation of the expectation value of $|L|^2$, assuming that $[L,\gamma_t]=0$ and $(L_x+L_y)\alpha_t=0$ for all times $t\in[0,T)$. We obtain
\begin{multline}
\frac{d}{dt}\tr(|L|^2\gamma_t)\\=\frac{\kappa\, i}{2}\pscal{\alpha_t,\left[|L_x|^2+|L_y|^2\,,\, \frac{1}{|x-y|}-w(|x-y|)\right]\alpha_t}_{L^2(\R^3;\C)\otimes L^2(\R^3;\C)}. 
\end{multline}
As $|x-y|^{-1}$ and $w(|x-y|)$ do not commute with $|L_x|^2+|L_y|^2$, the above term is usually not zero, and hence the square of the angular momentum is not conserved. This is a major difficulty that prevents us to use the same proof as in the HF case!

To see that the square of the angular momentum does actually vary in time, let us consider a sufficiently smooth initial datum $(\gamma_0,\alpha_0)$ with $\alpha_0\neq0$ such that $\gamma_0(x,y)=\gamma_0(|x|,|y|)$ and $\alpha_0(x,y)=\alpha_0(|x|,|y|)$. The latter property means exactly that $(|L_x|^2+|L_y|^2)\alpha_0(x,y)=0$. It is clear from the above formula that the derivative of $\tr(|L|^2\gamma_t)$ vanishes at time $t=0$. We have the following series of equality:
\begin{align*}
&\left(|L_x|^2+|L_y|^2\right)\left((\gamma_0)_x+(\gamma_0)_y\right)\alpha_0(|x|,|y|)V(x-y)\\
&\qquad\qquad\qquad\qquad=\left((\gamma_0)_x+(\gamma_0)_y\right)\left(|L_x|^2+|L_y|^2\right)\alpha_0(|x|,|y|)V(x-y)\\
&\qquad\qquad\qquad\qquad=2\left((\gamma_0)_x+(\gamma_0)_y\right)L_x\cdot L_y\alpha_0(|x|,|y|)V(x-y)\\
&\qquad\qquad\qquad\qquad=2\left((\gamma_0L)_x\cdot L_y+L_x\cdot (\gamma_0L)_y\right)\alpha_0(|x|,|y|)V(x-y)\\
&\qquad\qquad\qquad\qquad=0
\end{align*}
since $\gamma_0 L=L\gamma_0=0$ and $(L_x+L_y)V(x-y)=0$. Using this, we can compute the second derivative of the expectation value of $|L|^2$. We obtain
\begin{multline}
\frac{d^2}{dt^2}\tr(|L|^2\gamma_t)_{|t=0}=\kappa^2\pscal{\frac{\alpha_0(|x|,|y|)}{|x-y|},\left(|L_x|^2+|L_y|^2\right)\frac{\alpha_0(|x|,|y|)}{|x-y|}}_{L^2\otimes L^2}\\
+\kappa^2\pscal{\beta_0(x,y),\left(|L_x|^2+|L_y|^2\right)\beta_0(x,y)}_{L^2\otimes L^2}
\label{2nd_derivative_L2}
\end{multline}
where $\beta_0(x,y)=\alpha_0(|x|,|y|)w(|x-y|)$. It is easy to see that \eqref{2nd_derivative_L2} is always a positive quantity, except when $\alpha_0=0$. 

To sum up: Even when we start at time $t=0$ with a state in the zero angular momentum sector, it immediately acquires a change of the square of angular momentum, in stark contrast to HF theory \cite[p. 743]{FroLen-07}. 


\subsection{Blowup with Angular Momentum Cutoff}
As we have explained, the second moment of the angular momentum, $\tr(|L|^2\gamma_t)$, is not conserved for spherical states in general. In fact, it seems quite difficult to get any useful bound on this quantity. There is yet another difficulty that we have not mentioned so far: the pairing term is difficult to control and our proof reveals that we actually need a bound on $\tr(|L|^{6+\epsilon}\gamma_t)$ for some $\epsilon>0$; see Lemma \ref{lem:estim_alpha} below. It seems a very interesting question to investigate whether the angular momentum can indeed substantially alter the collapsing behavior of a gravitational system in HFB theory, or actually prevent collapse at all (which, however, is very unlikely  in our opinion).

For all these reasons, we will now introduce a simpler model for which we are able to prove blowup of spherical solutions with sufficiently negative energy. The idea is to enforce an {\em angular momentum cutoff} in the model as follows. Let $\Lambda \geq 0$ be a fixed integer, and define the (infinite-dimensional) orthogonal projector\footnote{As usual $\chi_{A}(x)$ is the characteristic function of the set $A$.}
$$P_\Lambda:=\chi_{[0,\Lambda(\Lambda+1)]}\left(|L|^2\right),$$
which projects $L^2(\R^3;\C)$ onto sectors with angular momentum $\ell \leq \Lambda(\Lambda+1)$. We will now \emph{impose} that our density matrices $\gamma$ and $\alpha$ satisfy for all times
\begin{equation}
P_\Lambda\gamma=\gamma P_\Lambda=\gamma,\qquad (P_\Lambda\otimes P_\Lambda)\,\alpha(x,y)=\alpha(x,y).
\label{HFB_mom_cut_off} 
\end{equation}
Note that the assumption on $\alpha$ is equivalent to $\big((P_\Lambda)_x+(P_\Lambda)_y\big)\alpha(x,y)=2\alpha(x,y)$, or in operator terms to $P_\Lambda\alpha=\alpha P_\Lambda=\alpha$.
Our assumption \eqref{HFB_mom_cut_off} is the same as saying that we restrict ourselves to HFB states in Fock space $\cF$ that actually belong to the restricted Fock space 
$$\cF_\Lambda=\oplus_{N\geq0}\wedge_1^N\gH_\Lambda\subset\cF,$$ 
where $\gH_\Lambda=P_\Lambda L^2(\R^3;\C)$. Hence, introducing the angular momentum cutoff is the same as replacing the one-body space $L^2(\R^3;\C)$ by $\gH_\Lambda$. This procedure does not change the HFB energy $\cE_{\rm HFB}$. But  it changes the HFB evolution equation, because the constraint \eqref{HFB_mom_cut_off} needs to be conserved for all times. The time-dependent equation in the cutoff model then becomes
\begin{equation}
\boxed{\left\{\begin{array}{l}
\displaystyle i\frac{d}{dt}\Gamma_t=\left[\cP_\Lambda F_{\Gamma_t}\cP_\Lambda \,,\, \Gamma_t\right] \\
(\gamma_{|t=0},\alpha_{|t=0})=(\gamma_0,\alpha_0)\in\cK_{\rm HFB} \text{ satisfying \eqref{HFB_mom_cut_off}},
\end{array}\right.}
\label{eq:time-dep_HFB_cut_off}
\end{equation}
where
$$\cP_\Lambda:=\left(\begin{matrix}
P_\Lambda & 0\\ 0 & 1-P_\Lambda\end{matrix}\right).$$
The coupled system \eqref{eq:system_TD_HFB} can be rewritten in a similar fashion.

Let us recall that the HFB dynamics introduced in the last section was obtained by restricting the (formal) unitary evolution of the Hamiltonian in Fock space to the submanifold of HFB states, by means of the Dirac-Frenkel principle. By introducing a cutoff in angular moment, we simply restrict ourselves to the time-evolution to the even smaller manifold of states belonging to $\gH_\Lambda$. We remark that, in numerical simulations, an angular momentum cutoff (as we introduced) will be in fact always imposed.

\medskip
The following main result of this paper now establishes blowup in HFB theory with an angular momentum cutoff. 

\begin{theorem}[Blowup for HFB with angular momentum cutoff]\label{thm:HFB_cut_off} Suppose $w$ satisfies assumption \eqref{assumption_w}.
Let $(\gamma_0,\alpha_0)\in\cK_{\rm HFB}$ be a radially symmetric initial datum satisfying \eqref{HFB_mom_cut_off} and such that
\[ \tr \, |x|^4 \gamma_0 + \tr \, (-\Delta) \gamma_0  < \infty. \]
Our conclusion is the following: If $(\gamma_0,\alpha_0)$ has sufficiently negative energy, that is
\begin{equation}
\cE_{\rm HFB} (\gamma_0,\alpha_0) < - \frac\kappa2 \sup_{r\geq0}|w(r)+rw'(r)|_-\left(\big(\tr\gamma_0\big)^2+\tr\gamma_0\right) ,
\label{condition_blow_up_HFB_cut_off}
\end{equation}
then the unique maximal local-in-time solution $(\gamma_t,\alpha_t)$ of \eqref{eq:time-dep_HFB_cut_off} blows up in finite time, i.\,e., we have $T<\ii$ and $\tr(-\Delta)^{1/2}\gamma_t\to\ii$ as $t\to T^-$.
\end{theorem}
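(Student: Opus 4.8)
The plan is to run the relativistic virial argument of \cite{FroLen-07,HaiSch-09}, now adapted to the coupled HFB system and made to work by exploiting the angular-momentum cutoff. The central object is the nonnegative relativistic virial operator
\[
M=\sum_{j=1}^3 x_j\,\sqrt{-\Delta+m^2}\,x_j\geq0,
\]
and the aim is to show that $\mathcal{V}(t):=\tr(M\gamma_t)$ satisfies a differential inequality forcing it to become negative in finite time, in contradiction with $\mathcal{V}(t)\geq0$. The conceptually decisive first observation is that the cutoff \eqref{HFB_mom_cut_off} trivializes the angular-momentum obstruction of Section \ref{sec:ang:HFB}: since $P_\Lambda\gamma_t=\gamma_t$ and $P_\Lambda\alpha_t=\alpha_t$ for all $t\in[0,T)$, the operator $|L|^2$ is bounded by $\Lambda(\Lambda+1)$ on the relevant ranges, whence
\[
\tr\big(|L|^s\gamma_t\big)\leq\big(\Lambda(\Lambda+1)\big)^{s/2}\tr(\gamma_0)<\infty\quad\text{and}\quad\tr\big(|L|^s\alpha_t^*\alpha_t\big)\leq\big(\Lambda(\Lambda+1)\big)^{s/2}\tr(\gamma_0)<\infty
\]
uniformly in $t$, for every $s\geq0$. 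In particular the moment $\tr(|L|^{6+\epsilon}\gamma_t)$ required by Lemma \ref{lem:estim_alpha} stays finite and uniformly bounded, although it is not conserved.

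Granting the persistence of higher Sobolev regularity for the HFB flow (the analogue of Remark \ref{rem:lwp}(iii)) together with the hypotheses $\tr|x|^4\gamma_0+\tr(-\Delta)\gamma_0<\infty$, I would first justify that $\mathcal{V}(t)$ is well-defined and of class $C^2$ on $[0,T)$, so that all the manipulations below are legitimate. The heart of the proof is the virial identity: differentiating $\mathcal{V}$ twice along the cutoff flow \eqref{eq:time-dep_HFB_cut_off}, written in the coupled form analogous to \eqref{eq:system_TD_HFB}, I would establish the pointwise bound
\[
\mathcal{V}''(t)\leq 4\left(\cE_{\rm HFB}(\gamma_0,\alpha_0)+\frac{\kappa}{2}\sup_{r\geq0}\big|w(r)+r\,w'(r)\big|_-\Big((\tr\gamma_0)^2+\tr\gamma_0\Big)\right)
\]
for all $t\in[0,T)$. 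Here the scale-covariant contributions produce the factor $4\,\cE_{\rm HFB}$: both the kinetic energy $\sqrt{-\Delta+m^2}$ and the homogeneous Newtonian interaction $-1/|x|$ are of degree $-1$ under dilations, while the rest mass $m$ only contributes a term of favorable sign, which I discard to obtain the upper bound. The non-scale-covariant perturbation $w$ produces exactly the correction $\tfrac{\kappa}{2}\sup_{r\geq0}|w(r)+rw'(r)|_-$, multiplied by $(\tr\gamma_0)^2$ through the direct and exchange channels and by $\tr\gamma_0$ through the pairing channel, the latter via the a priori bound $\tr(\alpha_t^*\alpha_t)\leq\tr(\gamma_0)$.

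The terms that are genuinely new compared with the HF computation of \cite{HaiSch-09} are the commutators generated by the projectors $\cP_\Lambda$ in \eqref{eq:time-dep_HFB_cut_off} and the contributions of the pairing density $\alpha_t$, namely the coupling $G_{\alpha_t}$ and the transport of $\alpha_t$ by the two-body Hamiltonian \eqref{def:Hamil_alpha}. I expect the estimation of these terms to be the main obstacle, and it is precisely here that the cutoff is indispensable: decomposing $\gamma_t$ and $\alpha_t$ over the finitely many angular sectors $\ell=0,\dots,\Lambda$ and using the uniform moment bounds of the first step together with Lemma \ref{lem:estim_alpha}, I would show that these contributions are either of favorable sign or absorbed into the $w$-correction above, so that the clean second-derivative bound stands; any genuinely time-dependent remainder is shown to enter only at lower order and not to affect the leading coefficient.

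Integrating the virial inequality twice then gives
\[
0\leq\mathcal{V}(t)\leq 2\left(\cE_{\rm HFB}(\gamma_0,\alpha_0)+\frac{\kappa}{2}\sup_{r\geq0}\big|w(r)+r\,w'(r)\big|_-\Big((\tr\gamma_0)^2+\tr\gamma_0\Big)\right)t^2+\mathcal{V}'(0)\,t+\mathcal{V}(0)
\]
for $t\in[0,T)$, where $\mathcal{V}(0)$ and $\mathcal{V}'(0)$ are finite by the moment assumptions and the bounded angular momentum. By hypothesis \eqref{condition_blow_up_HFB_cut_off} the coefficient of $t^2$ is strictly negative, so the right-hand side tends to $-\infty$; since the left-hand side is nonnegative, the solution cannot exist globally and hence $T<\infty$. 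Finally, the blowup alternative of the HFB well-posedness theorem upgrades $T<\infty$ to $\tr(-\Delta)^{1/2}\gamma_t\to\infty$ as $t\to T^-$, which is the claim.
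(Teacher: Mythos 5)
Your proposal is correct and follows essentially the same route as the paper: the nonnegative virial operator $M=\sum_j x_j\sqrt{-\Delta+m^2}\,x_j$ commutes with $P_\Lambda$, the cutoff yields the uniform bounds $\tr(|L|^s\gamma_t)\leq(\Lambda(\Lambda+1))^{s/2}\tr\gamma_0$ that make Lemma \ref{lem:estim_alpha} applicable, the pairing contribution to the dilation identity is absorbed via $\|\alpha_t\|_{L^2}^2\leq\tr\gamma_0$ into the extra $\tr\gamma_0$ in \eqref{condition_blow_up_HFB_cut_off}, and the resulting quadratic-in-$t$ upper bound on $\tr(M\gamma_t)\geq0$ with negative leading coefficient forces $T<\infty$. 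The only organizational caveat is that the paper does not establish a pointwise bound on $\frac{d^2}{dt^2}\tr(M\gamma_t)$ as you state it; it proves the two successive first-order bounds $\frac{d}{dt}\tr(M\gamma_t)\leq\tr(A\gamma_t)+C(1+\Lambda^{(9+\epsilon)/2})(\tr\gamma_0)^2$ and $\frac{d}{dt}\tr(A\gamma_t)\leq 2\,\cE_{\rm HFB}(\gamma_0,\alpha_0)+\kappa\sup_{r\geq0}|w(r)+rw'(r)|_-\big((\tr\gamma_0)^2+\tr\gamma_0\big)$ and integrates each once, so the $\Lambda$-dependent remainder enters the coefficient of $t$ (which is therefore $\tr(A\gamma_0)+C$ rather than $\mathcal{V}'(0)$), leaving the $t^2$ coefficient untouched exactly as you anticipated.
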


The proof of Theorem \ref{thm:HFB_cut_off} is provided below in Section \ref{sec:proof_HFB}.

\begin{remark}[Removing the angular momentum cutoff]
{\em It is not obvious how to get any information on the initial model from the study of the model with an angular momentum cutoff. However, it is easy to grasp from our proof in Section \ref{sec:proof_HFB} that if $\tr(|L|^{6+\epsilon}\gamma_t)$ stays bounded (or does not grow too fast), then blowup must occur under the same condition \eqref{condition_blow_up_HFB_cut_off}.}
\end{remark}

\begin{remark}[Case of the Laplacian]
{\em
We mention, as a side remark, that in a nonrelativistic HFB model, where $K$ is replaced by the Laplacian $-\Delta$, and with the scaling critical two-body potential $V(x-y)=-1/|x-y|^2$, higher powers of the angular momentum are also \emph{a priori} not conserved. However, in the nonrelativistic setting, one can use in the proof the observable $M=|x|^2$, instead of the operator $M=\sum_{i=1}^3x_i\sqrt{m^2-\Delta}x_i$, as done below. Hence, for nonrelativistic systems, no angular momentum conservation is needed, and one easily sees that blowup always occurs under a negative energy condition, even for non-spherically symmetric initial data. 

Nevertheless, it should be mentioned that the Laplacian $-\Delta$ is qualitatively very different from the pseudo-relativistic operator $\sqrt{-\Delta+m^2}$, which models a relativistic system with a `finite speed of propagation'.}
\end{remark}


\section{Proofs} \label{sec:proofs}
\subsection{Preliminaries}
In the proofs of our main results, we shall need several auxiliary lemmas, which we gather in this section for convenience. 

\subsubsection{Spherical Harmonics Expansion}
An important tool will be the expansion of the time-dependent states in terms of spherical harmonics. As we restrict ourselves to spherically symmetric states, our states will be uniformly distributed in each angular momentum sector and we will only encounter Legendre polynomials.

We recall that the $\ell$-th Legendre polynomial $P_\ell(t)$ is a real polynomial of degree $\ell$ defined on $[-1;1]$ which satisfies $-1\leq P_\ell(t)\leq 1$ and $P_\ell(1)=1$, for all integers $\ell\geq0$. Its is linked to the usual spherical harmonics $Y^m_\ell$ by the formula
\begin{equation}
P_\ell(\omega\cdot\omega')=\frac{1}{2\ell+1}\sum_{m=-\ell}^\ell Y_\ell^{m}(\omega)\overline{Y_\ell^{m}(\omega')},
\label{def:Legendre} 
\end{equation}
where $\omega,\omega'\in \mathbb{S}^2 =\{ x \in \R^3 : |x| = 1\}$. 

We define for all $\ell,\ell'$ the following function:
\begin{align}
F_{\ell,\ell'}(r,r') &= 2\pi \int_{-1}^1 P_\ell(t)P_{\ell'}(t)\, V \left( \sqrt{r^2 + r'^2 - 2 rr' t } \right) \, dt\nonumber\\
&= 2\pi \int_{-1}^1 P_\ell(t)P_{\ell'}(t)\left(-\frac{1}{\sqrt{r^2 + r'^2 - 2 rr' t }}+w\left(\sqrt{r^2 + r'^2 - 2 rr' t }\right) \right) \, dt.\label{def:F_ell}
\end{align}
We recall that $V(x)=-1/{|x|}+w(|x|)$. With the usual abuse of notation, we will henceforth write $V(|x|)$ as well as $V(x)$, whichever is more convenient for the case at hand.

The function $F_{\ell,\ell'}$ will often appear when integrating out the angle variable in the interaction potential between the projections of our states in, respectively, the $\ell$-th and the $\ell'$-th angular momentum sector. We will need the following
\begin{lemma}[Estimates on $F_{\ell,\ell'}$]\label{lem:estim_F_ell}
We have the following bounds:
\begin{equation}
|F_{\ell,\ell'}(r,r')|\leq \frac{4\pi\big(1+\norm{rw(r)}_{L^\ii[0,\ii)}\big)}{\max(r,r')}
\label{estim_F_ell_sup}
\end{equation}
and
\begin{equation}
| r^2\, \partial_r\,F_{\ell,\ell'}(r,r')|\leq C\left(1+\ell^2+(\ell')^2\right)+C_\epsilon\norm{(1+r^2)^{1+\epsilon}w'(r)}_{L^\ii[0,\ii)}
\label{estim_F_ell_diff_sup}
\end{equation} 
for some universal constant $C$ and some constant $C_\epsilon$ depending only on $\epsilon$.
\end{lemma}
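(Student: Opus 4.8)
The plan is to estimate the two integrals defining $F_{\ell,\ell'}$ and its radial derivative by exploiting the uniform bounds $|P_\ell|\leq 1$ on $[-1,1]$, so that the Legendre polynomials contribute only harmless factors, and the real work lies in controlling the potential $V$ and its derivative evaluated at $s:=\sqrt{r^2+r'^2-2rr't}$. First I would record the elementary geometric fact that as $t$ ranges over $[-1,1]$, the quantity $s$ ranges over $[|r-r'|,\,r+r']$, and that the substitution $t\mapsto s$ has Jacobian $ds = -\tfrac{rr'}{s}\,dt$, i.e. $dt = -\tfrac{s}{rr'}\,ds$. This change of variables is the workhorse for \eqref{estim_F_ell_sup}.

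\medskip

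For the sup bound \eqref{estim_F_ell_sup}, I would split $V(s) = -1/s + w(s)$ and bound each piece. Using $|P_\ell P_{\ell'}|\leq 1$ and the change of variables, the Coulomb part contributes
\[
2\pi\int_{-1}^1 \frac{dt}{\sqrt{r^2+r'^2-2rr't}} = \frac{2\pi}{rr'}\int_{|r-r'|}^{r+r'} ds = \frac{2\pi\,\big((r+r')-|r-r'|\big)}{rr'} = \frac{4\pi}{\max(r,r')},
\]
where the last identity is the standard $\min(r,r') = \tfrac12\big((r+r')-|r-r'|\big)$ evaluated so that $\min/(rr') = 1/\max$. For the $w$ part I would write $|w(s)|\leq \|rw(r)\|_{L^\infty}/s$ and run the identical computation, producing the factor $\|rw(r)\|_{L^\infty}/\max(r,r')$; adding the two pieces yields \eqref{estim_F_ell_sup}.

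\medskip

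For the derivative bound \eqref{estim_F_ell_diff_sup}, I would differentiate under the integral sign in the first line of \eqref{def:F_ell}, giving $\partial_r F_{\ell,\ell'}(r,r') = 2\pi\int_{-1}^1 P_\ell(t)P_{\ell'}(t)\,V'(s)\,\partial_r s\,dt$ with $\partial_r s = (r-r't)/s$. Writing $V'(s) = 1/s^2 + w'(s)$, the singular Coulomb contribution is the genuinely delicate term: $s^{-2}\,\partial_r s$ is not integrable against $dt$ uniformly in a way that survives the bare $s$-substitution, so I expect the main obstacle to be extracting the $(1+\ell^2+(\ell')^2)$ dependence here rather than a worse power. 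The cleanest route is to integrate by parts in $t$ to move the $t$-derivative off $V$ and onto the product $P_\ell(t)P_{\ell'}(t)$, since $\partial_t V(s) = V'(s)\cdot(-rr'/s)$ relates the $t$-derivative of the potential to $V'(s)$; the boundary terms are controlled by the sup bound already obtained, and the interior term produces $P_\ell'(t)P_{\ell'}(t)+P_\ell(t)P_{\ell'}'(t)$ integrated against $V(s)$. Here I would invoke the Bernstein/Markov-type estimate $\int_{-1}^1|P_\ell'(t)|\,dt \leq C\ell^2$ (equivalently the bound $\|P_\ell'\|_{L^1[-1,1]}\lesssim \ell(\ell+1)$), which is exactly the source of the quadratic angular-momentum factor $1+\ell^2+(\ell')^2$, combined with $|V(s)|\leq (1+\|rw\|_{L^\infty})/s$ and $|P_{\ell'}|\leq 1$. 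The multiplicative factor $r^2$ on the left-hand side of \eqref{estim_F_ell_diff_sup} is precisely what absorbs the remaining negative power of $s$ after the $s$-integration, since $s\geq |r-r'|$ combined with the geometric range makes $r^2\int ds/s^2$-type quantities bounded; the $w'$ contribution is handled separately by the assumption $(1+r^2)^{1+\epsilon}|w'(r)|\leq C$ in \eqref{assumption_w}, yielding the constant $C_\epsilon\|(1+r^2)^{1+\epsilon}w'(r)\|_{L^\infty}$. Careful bookkeeping of the boundary terms from the integration by parts, and verifying they do not contribute a spurious $\ell$-growth, is the step most likely to require the technical lemma deferred to the appendix.
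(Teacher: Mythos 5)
Your proof of \eqref{estim_F_ell_sup} is correct and coincides with the paper's (bound $|P_\ell P_{\ell'}|\leq 1$ and $|V(s)|\leq(1+\|rw\|_\infty)/s$, then apply Newton's theorem), and your treatment of the $w'$-contribution to \eqref{estim_F_ell_diff_sup} (differentiate under the integral, use $|\partial_r s|\leq 1$ and the decay hypothesis on $w'$) is also what the paper does. The gap is in the Coulomb part of \eqref{estim_F_ell_diff_sup}. Writing $V'(s)\,\partial_r s=\frac{1}{rr'}(r-r't)\,\partial_t V(s)$ and integrating by parts produces an overall prefactor $1/(rr')$, so after multiplying by $r^2$ \emph{every individual piece carries a factor $r/r'$}: the boundary terms equal $\frac{2\pi r}{r'}\big(\mathrm{sgn}(r-r')-(-1)^{\ell+\ell'}\big)$, and the interior term, estimated as you propose via $|(r-r't)/s|\leq 1$ and an $L^1$ bound on $P_\ell'$, is only controlled by $C(\ell+\ell')\,r/r'$. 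Both blow up as $r'\to0$ with $r$ fixed, even though their sum is bounded: letting $r'\to 0$ one sees that the $1/r'$ singularity of the boundary terms cancels exactly against the part of the interior term where $\partial_t$ falls on $P_\ell P_{\ell'}$ (which tends to $[P_\ell P_{\ell'}]_{-1}^{1}=1-(-1)^{\ell+\ell'}$). Your proposed crude bounds discard precisely this cancellation, so the argument as written does not close. Incidentally, the appendix lemma concerns $K_\ell-K_{\ell'}$ and plays no role in this estimate; the paper's proof of the present lemma is self-contained.

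The paper sidesteps the issue by expanding the Coulomb kernel in multipoles,
\begin{equation*}
\frac{1}{\sqrt{r^2+r'^2-2rr't}}=\sum_{m\geq0}\frac{\min(r,r')^m}{\max(r,r')^{m+1}}\,P_m(t),
\end{equation*}
for which one checks directly that $\big|r^2\partial_r\big(\min(r,r')^m/\max(r,r')^{m+1}\big)\big|\leq 1+m$ \emph{uniformly} in $r,r'$. The quadratic factor $1+\ell^2+(\ell')^2$ then comes not from derivatives of Legendre polynomials but from the selection rule that $\int_{-1}^1P_\ell P_{\ell'}P_m\,dt$ vanishes unless $|\ell-\ell'|\leq m\leq\ell+\ell'$, combined with $\big|\int_{-1}^1P_\ell P_{\ell'}P_m\,dt\big|\leq 2$. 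If you wish to keep the integration-by-parts route, you must group the boundary terms with the interior term and exhibit the $1/r'$ cancellation explicitly, which is substantially more delicate than the multipole argument.
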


\begin{proof}
Clearly we have
$$|F_{\ell,\ell'}(r,r')|\leq2\pi\big(1+\norm{rw(r)}_{L^\ii}\big)\int_{-1}^1\frac{dt}{\sqrt{r^2 + r'^2 - 2 rr' t }}=\frac{4\pi\big(1+\norm{rw(r)}_{L^\ii}\big)}{\max(r,r')}$$
which is nothing else but Newton's Theorem. For the second bound \eqref{estim_F_ell_diff_sup}, we treat $w$ and the Newton interaction separately. Recall that
$$\frac{1}{\sqrt{r^2 + r'^2 - 2 rr' t }}=\sum_{m\geq0}\frac{\min(r,r')^m}{\max(r,r')^{m+1}}P_m(t).$$
Differentiating explicitely we find
$$\left|r^2\partial_r\int_{-1}^1 \frac{P_\ell(t)P_{\ell'}(t)}{\sqrt{r^2 + r'^2 - 2 rr' t }}dt\right|\leq \sum_{m\geq0}(1+m)\left|\int_{-1}^1 P_\ell P_{\ell'}P_m\right|\leq C\left(1+\ell^2+(\ell')^2\right)$$
where we have used that the integral $\int_{-1}^1 P_\ell P_{\ell'}P_m$ vanishes when $m< |\ell-\ell'|$ and when $m>\ell+\ell'$, and that $\left|\int_{-1}^1 P_\ell P_{\ell'} P_m\right|\leq 2$. For the term involving $w$, we write
\begin{align*}
&\left|r^2\partial_r \int_{-1}^1P_\ell(t)P_{\ell'}(t)w\left(\sqrt{r^2 + r'^2 - 2 rr' t }\right)\, dt\right|\\
&\qquad\qquad\qquad\qquad\qquad \leq r^2 \int_{-1}^1\left|w'\left(\sqrt{r^2 + r'^2 - 2 rr' t }\right)\right|\, dt\\
&\qquad\qquad\qquad\qquad\qquad\leq \norm{(1+r^2)^{1+\epsilon}w'(r)}_{L^\ii}\, r^2 \int_{-1}^1\frac{dt}{\left(1+r^2 + r'^2 - 2 rr' t \right)^{1+\epsilon}}.
\end{align*}
Then we explicitely compute
\begin{multline*}
r^2 \int_{-1}^1\frac{dt}{\left(1+r^2 + r'^2 - 2 rr' t \right)^{1+\epsilon}}= \frac{r}{2\epsilon\, r'} \left( \frac{1}{(1+(r-r')^2)^{\epsilon}}-\frac{1}{(1+(r+r')^2)^{\epsilon}}\right)\\
= \frac{r}{2\epsilon\, r'} \frac{(1+r^2+(r')^2)^{\epsilon}\left(\left(1+\frac{2rr'}{1+r^2+(r')^2}\right)^{\epsilon}-\left(1-\frac{2rr'}{1+r^2+(r')^2}\right)^{\epsilon}\right)}{(1+(r-r')^2)^{\epsilon}(1+(r+r')^2)^{\epsilon}}\leq C_\epsilon
\end{multline*}
which ends the proof of Lemma \ref{lem:estim_F_ell}.
\end{proof}
 
The following is a simple consequence of Lemma \ref{lem:estim_F_ell}:
\begin{corollary}
\label{cor:Newt}
Let $\rho \in L^1 (\R^3)$ be spherically symmetric function. 
Then we have
\begin{equation}
\left|V\ast\rho (x)\right|\leq \frac{\left(1+\norm{rw(r)}_{L^\ii(0,\ii)}\right)\int_{\R^3}|\rho|}{|x|}
\label{estim:rho_Newt}
\end{equation}
and
\begin{multline}
\left|\nabla_x\,|x|^2\left(V\ast\rho\right) (x)\right|\\
\leq C\left(1+\norm{rw(r)}_{L^\ii(0,\ii)}+C_\epsilon\norm{(1+r^2)^{1+\epsilon}w'(r)}_{L^\ii(0,\ii)}\right)\int_{\R^3}|\rho|.
\label{estim:rho_Newt_diff}
\end{multline}
\end{corollary}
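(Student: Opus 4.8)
The plan is to deduce Corollary \ref{cor:Newt} from Lemma \ref{lem:estim_F_ell} by exploiting the spherical symmetry of $\rho$, which reduces the three-dimensional convolution to a purely radial computation governed precisely by the functions $F_{\ell,\ell'}$ in the case $\ell=\ell'=0$. Writing $x=r\omega$ with $r=|x|$ and $\omega\in\mathbb{S}^2$, and decomposing $y=r'\omega'$ in spherical coordinates, I would first observe that since $\rho(y)=\rho(|y|)=\rho(r')$ is radial, the integral $V\ast\rho(x)=\int_{\R^3}V(x-y)\rho(y)\,dy$ becomes, after integrating over the angular variable $\omega'$,
\begin{equation*}
V\ast\rho(x)=\int_0^\ii\left(\int_{\mathbb{S}^2}V\big(\sqrt{r^2+r'^2-2rr'\,\omega\cdot\omega'}\big)\,d\omega'\right)\rho(r')\,r'^2\,dr'=\int_0^\ii F_{0,0}(r,r')\,\rho(r')\,r'^2\,dr',
\end{equation*}
because $P_0\equiv1$ and the inner angular integral is exactly $F_{0,0}(r,r')$ as defined in \eqref{def:F_ell} (here $\omega\cdot\omega'=t$ runs over $[-1,1]$). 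In particular the left side depends only on $r=|x|$, so $V\ast\rho$ is itself radial.

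For the first bound \eqref{estim:rho_Newt}, I would simply insert the estimate \eqref{estim_F_ell_sup} with $\ell=\ell'=0$, namely $|F_{0,0}(r,r')|\leq 4\pi(1+\|rw(r)\|_{L^\ii})/\max(r,r')\leq 4\pi(1+\|rw(r)\|_{L^\ii})/r$, into the radial integral above. This yields
\begin{equation*}
|V\ast\rho(x)|\leq\frac{4\pi(1+\|rw(r)\|_{L^\ii})}{|x|}\int_0^\ii|\rho(r')|\,r'^2\,dr'=\frac{(1+\|rw(r)\|_{L^\ii})\int_{\R^3}|\rho|}{|x|},
\end{equation*}
after absorbing the factor $4\pi$ into the identity $\int_{\R^3}|\rho|=4\pi\int_0^\ii|\rho(r')|r'^2\,dr'$ (the constant $4\pi$ cancels exactly). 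For the second, gradient bound \eqref{estim:rho_Newt_diff}, I would use that $V\ast\rho$ is radial so $\nabla_x$ acts only through the radial derivative $\partial_r$; thus $|x|^2(V\ast\rho)(x)$ is a radial function of $r$ and its gradient has modulus $|\partial_r(r^2 V\ast\rho)|$. Here I would differentiate under the integral sign and bound $|\partial_r(r^2 F_{0,0}(r,r'))|$. Writing $\partial_r(r^2 F_{0,0})=2rF_{0,0}+r^2\partial_r F_{0,0}$, the first term is controlled by \eqref{estim_F_ell_sup} (giving $|2rF_{0,0}|\leq 8\pi(1+\|rw\|_{L^\ii})r/\max(r,r')\leq 8\pi(1+\|rw\|_{L^\ii})$) and the second by \eqref{estim_F_ell_diff_sup} with $\ell=\ell'=0$, which gives $|r^2\partial_r F_{0,0}|\leq C+C_\epsilon\|(1+r^2)^{1+\epsilon}w'(r)\|_{L^\ii}$. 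Integrating against $|\rho(r')|\,r'^2\,dr'$ and collecting constants then produces \eqref{estim:rho_Newt_diff}.

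The main obstacle, and the only genuinely delicate point, is justifying that the gradient of the radial function $|x|^2(V\ast\rho)(x)$ is exactly $\partial_r(r^2\, V\ast\rho)$ in modulus and that differentiation commutes with the radial integral; this requires that the bounds of Lemma \ref{lem:estim_F_ell} hold \emph{uniformly} in $r'$ (which they do, since the right-hand sides of \eqref{estim_F_ell_sup} and \eqref{estim_F_ell_diff_sup} with $\ell=\ell'=0$ are, after multiplication by $r'^2$ and integration against $\rho\in L^1$, integrable), so that dominated convergence applies. Everything else is a direct substitution of the $\ell=\ell'=0$ case of Lemma \ref{lem:estim_F_ell} into the radial reduction of the convolution, and the claimed constants match those in the statement of the corollary.
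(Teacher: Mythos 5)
Your proof is correct and follows essentially the same route as the paper: integrating out the angular variable to identify the radial kernel as $F_{0,0}$, then applying the two bounds of Lemma \ref{lem:estim_F_ell} with $\ell=\ell'=0$, together with the product-rule splitting $|\nabla(|x|^2 V\ast\rho)|\leq 2|x|\,|V\ast\rho|+|x|^2|\nabla V\ast\rho|$. The constants and cancellation of the factor $4\pi$ check out.
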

\begin{proof}
Integrating out the angle, we find
$$V\ast\rho(r)=\int_0^{\ii}(r')^2dr'\, F_{00}(r,r')\rho(r')$$
hence the result is a simple application of Lemma \ref{lem:estim_F_ell}, with $\ell=\ell'=0$ and using that $\left|\nabla|x|^2\left(V\ast\rho\right)\right|\leq 2|x|\,\left|V\ast\rho (x)\right|+|x|^2\left|\nabla V\ast\rho (x)\right|$.
\end{proof}

\subsubsection{Pseudo-Relativistic Kinetic Energy}
We now define the positive operator $K_\ell$ acting on the space $L^2_r := L^2([0,\ii),r^2dr)$ by 
\begin{equation}
K_\ell^2:=-\frac{1}{r^2}\frac{\partial}{\partial r}r^2\frac{\partial}{\partial r} +m^2+ \frac{\ell(\ell+1)}{r^2}, 
\label{def:K_ell}
\end{equation}
i.e. $K_\ell$ is nothing but the restriction of $\sqrt{m^2-\Delta}$ to the $\ell$-th angular momentum sector. We note that, 
for all $k=-\ell,\ldots,\ell$, 
$$(m^2-\Delta )Y_\ell^k(\omega_x)u(|x|)=Y^k_\ell(\omega_x)(K_\ell^2 u)(|x|),$$
and hence 
$$\sqrt{m^2-\Delta} Y_\ell^k(\omega_x)u(|x|)=Y^k_\ell(\omega_x)(K_{\ell}\, u)(|x|).$$ 
We will need two important results involving the operator $K_\ell$. The first one is the
\begin{lemma}[Commutator Estimate]\label{lem:Stein}
Suppose $m>0$. We have, for all $f \in W^{1,\infty} (\R^3)$, 
\begin{equation}\label{Stein0}
\left\| \left[ \sqrt{m^2-\Delta} , f(x) \right] \right\|_{\cB(L^2(\R^3))} \leq C \| \nabla f \|_{\infty} \, .
\end{equation}
In particular, we have for all radial function $f\in W^{1,\ii}(\R^3)$ and all nonnegative integer $\ell$,
\begin{equation}\label{Stein1}
\norm{\big[ K_\ell, f(r) \big]}_{\cB(L^2([0,\ii),r^2dr))}  \leq C\norm{\partial_r f(r)
}_{L^\infty(0,\ii)}.
\end{equation}
\end{lemma}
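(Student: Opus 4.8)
The plan is to derive the radial bound \eqref{Stein1} from the full-space estimate \eqref{Stein0} by an angular-momentum decomposition, and to prove \eqref{Stein0} by passing to the Fourier side and recognizing the commutator as a zeroth-order operator.

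First I would reduce \eqref{Stein1} to \eqref{Stein0}. Let $f\in W^{1,\ii}(\R^3)$ be radial, written $f=f(r)$; then $\|\nabla f\|_{L^\infty(\R^3)}=\|\partial_r f\|_{L^\infty(0,\ii)}$. Since both $\sqrt{m^2-\Delta}$ and multiplication by $f$ commute with rotations, the commutator $[\sqrt{m^2-\Delta},f]$ is block-diagonal for the orthogonal decomposition $L^2(\R^3)=\bigoplus_{\ell\geq0}\bigoplus_{k=-\ell}^{\ell}\{Y_\ell^k(\omega)u(r):u\in L^2([0,\ii),r^2dr)\}$. On each block it acts, by the intertwining relation $\sqrt{m^2-\Delta}\,Y_\ell^k(\omega)u(|x|)=Y_\ell^k(\omega)(K_\ell u)(|x|)$ recalled above, exactly as $[K_\ell,f]$ on $L^2([0,\ii),r^2dr)$. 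As the operator norm of a block-diagonal operator is the supremum of the norms of its blocks, $\|[K_\ell,f]\|\leq\|[\sqrt{m^2-\Delta},f]\|\leq C\|\partial_r f\|_\infty$ for every $\ell$, which is \eqref{Stein1}.

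For \eqref{Stein0} I would work on the Fourier side, where $\sqrt{m^2-\Delta}$ is multiplication by $b(\xi):=\sqrt{m^2+|\xi|^2}$ and $f$ is convolution by $\hat f$; hence $[\sqrt{m^2-\Delta},f]$ has Fourier integral kernel $K(\xi,\eta)=(b(\xi)-b(\eta))\,\hat f(\xi-\eta)$. Since $t\mapsto\sqrt{m^2+t^2}$ is $1$-Lipschitz, the elementary bound $|b(\xi)-b(\eta)|\leq\big||\xi|-|\eta|\big|\leq|\xi-\eta|$ gives $|K(\xi,\eta)|\leq|\xi-\eta|\,|\hat f(\xi-\eta)|=|\widehat{\nabla f}(\xi-\eta)|$, so that at the level of symbols the commutator is an operator of order zero carrying exactly one gradient of $f$. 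Writing $b(\xi)-b(\eta)=(\xi-\eta)\cdot\int_0^1\nabla b(\eta+\tau(\xi-\eta))\,d\tau$ and $(\xi-\eta)\hat f(\xi-\eta)=\tfrac1i\widehat{\nabla f}(\xi-\eta)$ exhibits $[\sqrt{m^2-\Delta},f]$ as an average over $\tau\in[0,1]$ of operators with Fourier kernel $\tfrac1i\,\nabla b(\tau\xi+(1-\tau)\eta)\cdot\widehat{\nabla f}(\xi-\eta)$, i.e. a first Calderon commutator built from the smooth bounded symbol $\nabla b$ (note $|\nabla b|\leq1$ and $|\nabla^2 b|\leq C/m$ since $m>0$), which one expects to be $L^2$-bounded with norm controlled by $\|\nabla f\|_\infty$.

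The hard part will be exactly this last $L^2$-boundedness, which cannot be reached by soft arguments. Indeed, estimating the standard representation $[\sqrt A,f]=\tfrac1\pi\int_0^\infty\sqrt s\,(A+s)^{-1}[A,f](A+s)^{-1}\,ds$ (with $A=m^2-\Delta$) term by term yields only $\|[\sqrt A,f]\|\leq C\|\nabla f\|_\infty\int_0^\infty\sqrt s\,(m^2+s)^{-3/2}\,ds$, which diverges logarithmically; the same logarithmic divergence appears in the heat-semigroup representation and in solving the Sylvester equation $\sqrt A\,C+C\sqrt A=[A,f]$ for $C=[\sqrt A,f]$. Thus genuine cancellation between frequencies is required, and one must invoke the Calderon commutator theorem, equivalently the $L^2$-boundedness of zeroth-order pseudodifferential operators of Calderon--Vaillancourt type, which applies here because $m>0$ makes $b$ a smooth elliptic symbol of order one across the origin; this is standard singular-integral theory (Stein). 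Finally, since the constant depends only on $\|\nabla f\|_\infty$, I would first prove the bound for $f\in C_c^\infty(\R^3)$ and then pass to general $f\in W^{1,\ii}(\R^3)$ by density.
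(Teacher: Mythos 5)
Your reduction of \eqref{Stein1} to \eqref{Stein0} by block-diagonalizing the commutator over angular momentum sectors is exactly the paper's argument, and for \eqref{Stein0} itself both you and the paper ultimately defer to the Calder\'on commutator theorem as presented in Stein's book rather than proving the $L^2$-boundedness from scratch. Your additional observations (the explicit Fourier kernel, the failure of the naive resolvent representation by a logarithmic divergence) are correct and consistent with the paper's reliance on that deeper singular-integral result, so the proposal is essentially the same proof.
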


A proof of \eqref{Stein0} can be found in \cite{Stein-93}; see in particular the Corollary on page~309. The statement of this corollary does not give an effective bound on the norm of the commutator. However, the corollary is based on Theorem 3, on page~294 of \cite{Stein-93}, whose proof provides the effective control we need (a remark in this sense can be found in the paragraph 3.3.5, on page 305 of \cite{Stein-93}). On the other hand, \eqref{Stein1} can be proven by applying \eqref{Stein0} in the subspace of $L^2(\R^3)$ consisting of functions of the form $Y^k_\ell(\omega_x)u(|x|)$, which is stabilized by the operator $[\sqrt{m^2-\Delta},f]$.

\medskip

The second important result that will be very useful is the following
\begin{lemma}[Estimating $K_\ell-K_{\ell'}$]\label{lem:diff_K_ell} 
There exists a constant $C$ such that one has, for all nonnegative integers $\ell,\ell'$,
\begin{equation}\label{Stein2}
\norm{\left(K_\ell - K_{\ell'}\right) r}_{\cB(L^2([0,\ii),r^2dr))}\\
 \leq C (1+\ell+\ell')|\ell-\ell'|.
\end{equation} 
\end{lemma}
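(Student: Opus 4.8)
The plan is to reduce the norm bound on the operator $(K_\ell - K_{\ell'})r$ to an explicit computation using the defining formula \eqref{def:K_ell} for $K_\ell^2$. First I would observe that the difficulty with $K_\ell - K_{\ell'}$ is that taking square roots does not play nicely with differences; the natural object to control directly is $K_\ell^2 - K_{\ell'}^2$, which by \eqref{def:K_ell} is simply the multiplication operator
\begin{equation*}
K_\ell^2 - K_{\ell'}^2 = \frac{\ell(\ell+1)-\ell'(\ell'+1)}{r^2} = \frac{(\ell-\ell')(\ell+\ell'+1)}{r^2},
\end{equation*}
since the radial Laplacian and the $m^2$ term cancel. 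The point is that this difference, multiplied by $r^2$, is a bounded (scalar) operator, whereas individually $K_\ell$ behaves like $\sqrt{-\Delta}$.

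The key algebraic step is to pass from $K_\ell^2 - K_{\ell'}^2$ to $K_\ell - K_{\ell'}$ via an integral representation of the square root. I would use the standard formula
\begin{equation*}
K_\ell - K_{\ell'} = \frac{1}{\pi}\int_0^\infty \left( \frac{1}{K_{\ell'}^2 + s} - \frac{1}{K_\ell^2 + s}\right)\,ds
= \frac{1}{\pi}\int_0^\infty \frac{1}{K_\ell^2+s}\,\big(K_\ell^2 - K_{\ell'}^2\big)\,\frac{1}{K_{\ell'}^2+s}\,ds,
\end{equation*}
valid for positive operators (here $m>0$ guarantees $K_\ell^2 \geq m^2 > 0$, so the resolvents are well-behaved). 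Multiplying on the right by $r$ and inserting the explicit form of $K_\ell^2 - K_{\ell'}^2$ gives an integrand involving $(K_\ell^2+s)^{-1}\,r^{-2}\,(K_{\ell'}^2+s)^{-1}\,r$, times the scalar factor $(\ell-\ell')(\ell+\ell'+1)/\pi$. The factor $|\ell-\ell'|(\ell+\ell'+1)$ already matches the claimed bound $C(1+\ell+\ell')|\ell-\ell'|$, so it remains to show the integral of the operator norm $\big\| (K_\ell^2+s)^{-1}\, r^{-2}\, (K_{\ell'}^2+s)^{-1}\, r\big\|$ over $s\in[0,\infty)$ is bounded by a universal constant.

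The main obstacle is controlling this resolvent integral uniformly in $\ell,\ell'$. Here I would exploit that $K_\ell^2 \geq -r^{-2}\partial_r r^2 \partial_r + m^2 + \ell(\ell+1)/r^2$ dominates both the Hardy-type term $c/r^2$ and the mass $m^2$; in particular $r^{-2} \leq C(K_\ell^2 + m^2)$ in the sense of operators (a one-dimensional Hardy inequality on $L^2([0,\infty),r^2dr)$), which lets me absorb the singular factor $r^{-2}$ into the resolvent $(K_\ell^2+s)^{-1}$, and I would commute $r$ through $(K_{\ell'}^2+s)^{-1}$ using a bound of the form $\|(K_{\ell'}^2+s)^{-1}r\| \leq C/(m^2+s)^{1/2}\cdot r$-type control obtained from $\|(K_{\ell'}^2+s)^{-1/2}\|\leq (m^2+s)^{-1/2}$ together with $\|(K_{\ell'}^2+s)^{-1/2}r\|\leq C$. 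Combining these yields a pointwise (in $s$) bound of order $(m^2+s)^{-1}$ up to the constant, which is integrable near infinity; the behavior near $s=0$ is harmless because $m>0$ keeps the resolvents bounded. Carrying out these resolvent estimates carefully, with the Hardy inequality as the essential input to tame the $r^{-2}$ singularity, is where the real work lies, but the final $s$-integral converges to a universal constant and delivers the stated bound.
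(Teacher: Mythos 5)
Your strategy---reducing $K_\ell-K_{\ell'}$ to the explicit multiplication operator $K_\ell^2-K_{\ell'}^2=(\ell-\ell')(\ell+\ell'+1)r^{-2}$ via a resolvent representation of the operator square root---is a natural idea and genuinely different from the paper's proof, which instead writes the quadratic form of $\sqrt{m^2-\Delta}-m$ with its singular integral kernel $\cK_2(|x-y|)/|x-y|^2$, so that $K_\ell-K_{\ell'}$ is controlled by the difference $P_\ell(\omega_x\cdot\omega_y)-P_{\ell'}(\omega_x\cdot\omega_y)$ of Legendre polynomials, and then bounds $\sup_{\alpha}|P_\ell(\alpha)-P_{\ell'}(\alpha)|/(1-\alpha)$ by $C(1+\ell+\ell')|\ell-\ell'|$ using Legendre recurrence identities. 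Unfortunately, as written your argument has concrete gaps that break it.

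First, the integral representation is incorrect: formally $\frac{1}{\pi}\int_0^\infty\left(\frac{1}{B+s}-\frac{1}{A+s}\right)ds=\frac{1}{\pi}\log(B A^{-1})$, not $\sqrt{A}-\sqrt{B}$; the correct identity carries a weight, $\sqrt{A}-\sqrt{B}=\frac{1}{\pi}\int_0^\infty\sqrt{s}\,(B+s)^{-1}(A-B)(A+s)^{-1}\,ds$, and this extra factor of $\sqrt{s}$ makes the convergence at $s=\infty$ strictly harder than you claim. Second, the bound $\|(K_{\ell'}^2+s)^{-1/2}r\|\leq C$ is false: $r$ is multiplication by an unbounded function and the resolvent of a differential operator provides no spatial decay, so only \emph{negative} powers of $r$ can be absorbed via Hardy's inequality. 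The factor $r$ sits on the wrong side and must be commuted through $(K_\ell^2+s)^{-1}$, generating commutator terms involving $[K_\ell^2,r]$ that your sketch does not address. Third, even granting your individual norm estimates, a pointwise bound of order $(m^2+s)^{-1}$ is not integrable on $[0,\infty)$, and with the correct $\sqrt{s}$ weight one needs decay strictly faster than $s^{-3/2}$; moreover the only uniform-in-$\ell$ device you offer for taming $r^{-2}$, namely $r^{-2}\leq 4K_\ell^2\leq 4(K_\ell^2+s)$, destroys all decay in $s$. So the $s$-integral does not close as described. The approach might be salvageable with a careful commutator analysis and an interpolation between the Hardy bound and $r^{-2}\leq K_\ell^2/(\ell(\ell+1)+\tfrac14)$, but that is exactly where the real work lies and it is missing; the paper's Legendre-polynomial argument sidesteps these operator-theoretic difficulties entirely.
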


The proof of Lemma \ref{lem:diff_K_ell} cannot be directly deduced from the literature,  and we provide its proof in Section \ref{app:proof_lem:Stein} below.

\subsection{Hartree-Fock Theory: Proof of Theorem \ref{thm:HF}}\label{sec:proof_HF}
\subsubsection*{Step 1. Conservation of angular momentum.}
First of all, the proof of Theorem \ref{thm:HF} makes use of the fact that (\ref{eq:HF1}) preserves both the spherical symmetry and the total angular momentum. We recall that $L=-ix\wedge\nabla$.
\begin{lemma}[Conservation of spherical symmetry and angular momentum]\label{lem:conservation_spherical_sym}
Let $\gamma_0\in \cK_{\rm HF}$ be a spherically symmetric HF state. Then the unique maximal solution $\gamma_t$ to (\ref{eq:HF2}) with $\gamma_{|t=0} = \gamma_0$ is spherically symmetric for all times $t\in[0;T)$.

If furthermore $\tr(|L|^2\gamma_0)<\ii$, then $\gamma_t$ satisfies for all times $[L,\gamma_t]=0$ and the total angular momentum is conserved: 
\[ \tr \; |L|^2  \gamma_t = \tr \; |L|^2 \gamma_0 \qquad \text{for all } t \in [0,T) . \]
\end{lemma}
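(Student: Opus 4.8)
The plan is to exploit the rotational equivariance of the mean-field map $\gamma\mapsto H_\gamma$ together with the uniqueness part of Theorem~\ref{thm:LWP}. For $R\in SO(3)$ let $U_R$ be the unitary on $L^2(\R^3;\C)$ defined by $(U_R\psi)(x)=\psi(R^{-1}x)$. A direct kernel computation shows that $U_R\gamma U_R^*$ has integral kernel $(x,y)\mapsto\gamma(R^{-1}x,R^{-1}y)$, so that $\gamma$ is spherically symmetric in the sense of Definition~\ref{def:sph_sym_HF} if and only if $U_R\gamma U_R^*=\gamma$ for every $R\in SO(3)$. First I would check that the map $\gamma\mapsto H_\gamma$ is equivariant, namely
\begin{equation}
U_R\,H_\gamma\,U_R^*=H_{U_R\gamma U_R^*}\qquad\text{for all }R\in SO(3).
\label{eq:equivariance}
\end{equation}
This is verified term by term in \eqref{def:mean-field}: the kinetic part $K=\sqrt{-\Delta+m^2}$ commutes with $U_R$; the direct term transforms as $U_R(V*\rho_\gamma)U_R^*=V*\rho_{U_R\gamma U_R^*}$ because $V$ is radial and $\rho_{U_R\gamma U_R^*}(x)=\rho_\gamma(R^{-1}x)$; and the exchange operator with kernel $V(x-y)\gamma(x,y)$ is sent by conjugation to the operator with kernel $V(x-y)\gamma(R^{-1}x,R^{-1}y)$, again using that $V(|x|)$ is radial.

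Given the equivariance \eqref{eq:equivariance}, the conservation of spherical symmetry is immediate. Let $\gamma_t$ be the maximal solution with $\gamma_0$ spherically symmetric, and set $\tilde\gamma_t:=U_R\gamma_tU_R^*$. Conjugating \eqref{eq:HF1} and using \eqref{eq:equivariance} gives $i\,\frac{d}{dt}\tilde\gamma_t=[H_{\tilde\gamma_t},\tilde\gamma_t]$, so $\tilde\gamma_t$ solves the same initial-value problem as $\gamma_t$ (since $\tilde\gamma_0=U_R\gamma_0U_R^*=\gamma_0$). By the uniqueness statement of Theorem~\ref{thm:LWP} we conclude $U_R\gamma_tU_R^*=\gamma_t$ for all $t\in[0,T)$ and all $R\in SO(3)$, which is the claimed spherical symmetry.

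For the second part, I would first note that \eqref{eq:equivariance} together with $U_R\gamma_tU_R^*=\gamma_t$ shows that $H_{\gamma_t}$ is itself rotation invariant, i.e.\ $U_RH_{\gamma_t}U_R^*=H_{\gamma_t}$ for all $R$. Writing the rotation about the $j$-th axis by angle $\theta$ as $e^{-i\theta L_j}$ and differentiating at $\theta=0$ yields $[L_j,H_{\gamma_t}]=0$ and likewise $[L_j,\gamma_t]=0$; in particular $[|L|^2,H_{\gamma_t}]=0$ and $[L,\gamma_t]=0$. The conservation of the total angular momentum then follows from the computation
\begin{equation}
\frac{d}{dt}\tr(|L|^2\gamma_t)=\frac1i\,\tr\!\big(|L|^2[H_{\gamma_t},\gamma_t]\big)=\frac1i\,\tr\!\big([|L|^2,H_{\gamma_t}]\,\gamma_t\big)=0,
\label{eq:ddtL2}
\end{equation}
where the middle equality uses cyclicity of the trace and the last one uses $[|L|^2,H_{\gamma_t}]=0$.

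The steps above are essentially algebraic; the main obstacle is to make the manipulations in \eqref{eq:ddtL2} rigorous, since $|L|^2$, $K$ and $H_{\gamma_t}$ are all unbounded and $K$ is nonlocal, so neither the differentiation under the trace nor the cyclic rearrangement is automatic. I would handle this using the persistence of higher regularity recorded in Remark~\ref{rem:lwp}(iii): the hypotheses $\tr|L|^2\gamma_0<\infty$ and $\tr(-\Delta)\gamma_0<\infty$ propagate (locally in time, and continuously) to $\gamma_t$, which makes $\tr(|L|^2\gamma_t)$ finite and differentiable and legitimizes treating $|L|^2\gamma_t$ and $[|L|^2,H_{\gamma_t}]\gamma_t$ as trace-class operators. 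The differentiation and the reordering of the trace are then justified by a standard approximation argument --- for instance replacing $|L|^2$ by the bounded regularization $|L|^2(1+\delta|L|^2)^{-1}$ and passing to the limit $\delta\to0$ --- after which \eqref{eq:ddtL2} holds rigorously and integrates to $\tr(|L|^2\gamma_t)=\tr(|L|^2\gamma_0)$ for all $t\in[0,T)$.
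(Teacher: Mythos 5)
Your proof is correct and follows essentially the same route as the argument the paper relies on: the paper does not reprove this lemma but defers to [HaiSch-09], where the preservation of spherical symmetry is obtained exactly via the rotational equivariance of $\gamma\mapsto H_\gamma$ combined with uniqueness of solutions, and the conservation of $\tr(|L|^2\gamma_t)$ via the commutator identity $[|L|^2,H_{\gamma_t}]=0$. Your regularization of $|L|^2$ to justify the formal trace manipulations is a reasonable way to supply the details.
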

Note in particular Lemma \ref{lem:conservation_spherical_sym} says that $\tr|L|^2\gamma_t<\ii$ for all $t\in[0,T)$.

The proof of this lemma can be found in \cite{HaiSch-09} (it is trivial to see that the introduction of the regular and spherically symmetric potential $w$ and the fact that $\gamma$ may have infinite rank do not affect the proof of this lemma). The conservation of the squared angular momentum as expressed by Lemma \ref{lem:conservation_spherical_sym} will be a key tool in our proof, as it was already in \cite{HaiSch-09}.

\medskip

For the rest of the proof, we introduce the non-negative self-adjoint operator 
$$M:=\sum_{i=1}^3x_i \sqrt{-\Delta+m^2} \,x_i.$$
As in \cite{FroLen-07,HaiSch-09}, the strategy of the proof consists in showing that the second derivative of the expectation $\tr M \gamma_t$ is negative, if the energy $\cE_{\rm HF} (\gamma_t) = \cE_{\rm HF} (\gamma_0)$ satisfies a certain condition. 

\subsubsection*{Step 2. Estimate on direct and exchange terms.} We will prove in this step that there exists a constant $C$ (depending on $w$) such that \begin{equation}\label{eq:step1} \frac{d}{dt} \, \tr \, M \gamma_t \leq \tr\, A \gamma_t + C \tr (\gamma_0)\; \tr (1+|L|^2)\gamma_0  \end{equation} with $A = x \cdot p + p \cdot x$. Here and henceforth, we use the notation $p = -i\nabla_x$ for the momentum operator and we use the notation $C$ to denote any constant which only depends on $w$. The explicit dependence in $w$ is very easy to derive, as it only occurs from the estimates of Lemma \ref{lem:estim_F_ell}.

\begin{remark}
{\em One can check that $\tr ( (1+|x|^4 + |p|^2) \gamma_t ) < +\infty$ for $t \in [0,T)$, provided that initially $\tr ( (1+|x|^4 + |p|^2) \gamma_0) < +\infty$ holds. This fact may be used to show that all terms below are well-defined. Also, we note that
$$
\tr (M \gamma_t) \leq C \tr ( 1+ |x|^4 + |p|^2 ) \gamma_t)< \infty
$$
and 
$$
\tr (|A| \gamma_t) \leq C \tr ( (1+|x|^4 + |p|^2) \gamma_t ) < \infty
$$
for all times $t \in [0,T)$. }
\end{remark}

\begin{remark}
{\em In order to have well-defined terms, we impose the higher regularity condition $\| \gamma_0 \|_{\mathcal{H}^{2}} < \infty$; see also Remark 1 iii) above. Note that a breakdown of the solution in $\| \cdot \|_{\cH^{s}}$ for some $s \geq 1/2$ implies breakdown of the solution in energy norm $\| \cdot \|_{\cH^{1/2}}$; this claim follows easily by adapting the arguments in \cite{Lenzmann-07}. }
\end{remark}

We start by computing 
\begin{equation}\label{eq:st1} \frac{d}{dt} \tr M \gamma_t = \tr A \gamma_t - i \tr \left[ M, V*\rho_{\gamma_t}\right] \gamma_t + i \tr \left[ M, R_{\gamma_t} \right] \gamma_t , \end{equation} 
where we recall that
$$R_\gamma(x,y)=V(|x-y|)\gamma(x,y)=-\frac{\gamma(x,y)}{|x-y|}+\gamma(x,y)w(|x-y|).$$
To prove (\ref{eq:step1}), we have to bound the last two terms on the r.h.s. of the last equation, arising, respectively, from the direct and the exchange term in the Hartree-Fock equation (\ref{eq:HF1}). We will estimate $\tr A \gamma_t$ later in Step 3.

The appropriate estimate on the direct term is given in the following
\begin{lemma}[Estimating the direct term] \label{lem:direct_term}
 We have for all spherically symmetric $\gamma\in\cK_{\rm HF}$ 
\begin{equation}
\left| \tr \left[M, \big(V\ast\rho_\gamma\big)\right] \gamma \right| \leq  C\; \big(\tr(\gamma)\big)^2
\label{eq:direct}
\end{equation}
where $C$ only depends on $w$.
\end{lemma}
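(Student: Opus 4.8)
The plan is to bound the commutator of the relativistic virial operator $M$ with the self-consistent direct potential $V\ast\rho_\gamma$, exploiting spherical symmetry to reduce everything to the radial estimates of Lemma \ref{lem:estim_F_ell} (in the form packaged by Corollary \ref{cor:Newt}). First I would observe that $M=\sum_i x_i K x_i$ with $K=\sqrt{-\Delta+m^2}$, and that the multiplication operator $V\ast\rho_\gamma$ commutes with the $x_i$ factors, so the commutator only hits the kinetic part:
\begin{equation*}
\big[M,V\ast\rho_\gamma\big]=\sum_{i=1}^3 x_i\,\big[K,V\ast\rho_\gamma\big]\,x_i .
\end{equation*}
Thus $\tr\big(\big[M,V\ast\rho_\gamma\big]\gamma\big)=\sum_i\tr\big(x_i[K,V\ast\rho_\gamma]x_i\gamma\big)$, and the whole problem is reduced to controlling the single commutator $[K,V\ast\rho_\gamma]$ weighted by the $x_i$'s.

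Next I would invoke the Commutator Estimate of Lemma \ref{lem:Stein}: writing $\phi=V\ast\rho_\gamma$, which is radial because $\rho_\gamma$ is radial (spherical symmetry of $\gamma$), we have $\|[K,\phi]\|_{\cB(L^2)}\leq C\|\nabla\phi\|_\infty$. However, a naive application loses the spatial decay we need, since $\nabla\phi$ is not in $L^\infty$ globally without care and, more importantly, the two extra factors of $|x|$ from $x_i\cdots x_i$ must be absorbed. The right move is therefore not to estimate $\|[K,\phi]\|$ alone but to commute a factor $|x|^2$ inside: the key identity is that for radial $\phi$ one controls $\nabla\big(|x|^2\phi\big)$ rather than $\nabla\phi$, which is exactly the quantity bounded in \eqref{estim:rho_Newt_diff} of Corollary \ref{cor:Newt}, namely
\begin{equation*}
\big\|\nabla_x\,|x|^2\,(V\ast\rho_\gamma)(x)\big\|_\infty\leq C\int_{\R^3}\rho_\gamma=C\,\tr(\gamma),
\end{equation*}
with $C$ depending only on $w$ through $\|rw(r)\|_{L^\infty}$ and $\|(1+r^2)^{1+\epsilon}w'(r)\|_{L^\infty}$. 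The plan is to rewrite $\sum_i x_i[K,\phi]x_i$ so that the weights combine with $\phi$ into $|x|^2\phi$ (up to commutators of $K$ with $x_i$, which are bounded operators producing lower-order terms controlled by another application of Lemma \ref{lem:Stein}), apply the commutator bound to the \emph{function} $|x|^2\phi$, and then bound the resulting trace by $C\,\tr(\gamma)$ times the operator norm of $[K,|x|^2\phi]\leq C\|\nabla(|x|^2\phi)\|_\infty\leq C\,\tr(\gamma)$.

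The main obstacle I anticipate is the bookkeeping of how the two factors of $x_i$ interact with $K$: since $x_i$ does not commute with $K$, extracting the weight $|x|^2$ to form $\nabla(|x|^2\phi)$ generates additional commutator terms $[K,x_i]$, and one must check that each such term is a bounded (or trace-class-weighted) operator so that the final trace against $\gamma$ is finite and controlled by $\tr(\gamma)$ and, at worst, $\tr((1+|L|^2)\gamma)$. I expect these error terms to be harmless precisely because $[K,x_i]$ is bounded by Lemma \ref{lem:Stein} and because the angular-momentum factor only enters through the off-diagonal Legendre coupling already estimated in Lemma \ref{lem:estim_F_ell}; the upshot is the clean bound $|\tr([M,V\ast\rho_\gamma]\gamma)|\leq C(\tr\gamma)^2$, where one factor of $\tr\gamma$ comes from $\int\rho_\gamma=\tr\gamma$ in Corollary \ref{cor:Newt} and the second from taking the trace of the remaining weighted operator against $\gamma$.
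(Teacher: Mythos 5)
Your proposal is correct and follows essentially the same route as the paper: write $M=\sum_i x_iKx_i$, absorb the two weights into the function $|x|^2(V\ast\rho_\gamma)$ so that Lemma \ref{lem:Stein} applies to $[K,|x|^2(V\ast\rho_\gamma)]$, control the leftover terms (which are bounded operators of the form $[K,x_i]$ times multiplication by $x_i(V\ast\rho_\gamma)$, hence bounded by $\||x|(V\ast\rho_\gamma)\|_\infty$) and invoke both estimates of Corollary \ref{cor:Newt} together with $|\tr(B\gamma)|\leq\|B\|\tr\gamma$ to get $C(\tr\gamma)^2$. Your worry about a possible $\tr((1+|L|^2)\gamma)$ contribution is unnecessary for the direct term — no angular-momentum decomposition enters here, exactly as you ultimately conclude.
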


For the exchange term, we have to face the problem that $\gamma_t$ can \emph{a priori} be infinite rank, hence the method of \cite{HaiSch-09} does not apply directly. However, an estimate similar to what was proved in \cite{HaiSch-09} holds true, as expressed in the following 
\begin{lemma}[Estimating the exchange term]\label{lem:exchange_term}
We have for all spherically symmetric $\gamma\in\cK_{\rm HF}$ such that $\tr|L|^2\gamma<\ii$
\begin{equation}
\left| \tr \left[M, R_\gamma\right] \gamma \right| \leq  C\; \tr(\gamma)\;\tr (1+|L|^2)\gamma 
\label{eq:exchange}
\end{equation}
where $C$ only depends on $w$.
\end{lemma}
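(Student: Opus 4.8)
The plan is to estimate the exchange term $\tr[M,R_\gamma]\gamma$ by expanding $\gamma$ into angular momentum sectors and carefully tracking the commutator $[M,R_\gamma]$, where $R_\gamma(x,y)=V(|x-y|)\gamma(x,y)$ and $M=\sum_i x_i K x_i$ with $K=\sqrt{-\Delta+m^2}$. First I would write the commutator $[M,R_\gamma]=\sum_i(x_i K x_i R_\gamma - R_\gamma x_i K x_i)$ and decompose it so that $R_\gamma$ is controlled through the angular decomposition. Since $\gamma$ is spherically symmetric and $\tr|L|^2\gamma<\infty$, I would write the spectral expansion $\gamma=\sum_{\ell\geq0}\sum_{k=-\ell}^\ell |Y_\ell^k\rangle\langle Y_\ell^k|\otimes\gamma_\ell$, where each $\gamma_\ell$ is a positive trace-class operator on the radial space $L^2_r=L^2([0,\infty),r^2dr)$, and where the uniform distribution over $m=-\ell,\dots,\ell$ reflects the spherical symmetry. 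In this representation the kernel $\gamma(x,y)$ involves the Legendre polynomials $P_\ell(\omega_x\cdot\omega_y)$ through \eqref{def:Legendre}, so the angular integration in $R_\gamma$ produces exactly the functions $F_{\ell,\ell'}$ of Lemma \ref{lem:estim_F_ell}.

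The key computational step is to reduce the operator $[M,R_\gamma]$ to radial operators sandwiched between the kinetic operators $K_\ell$, then use the two estimates on $F_{\ell,\ell'}$. The crucial observation is that the commutator with $M$ brings down a factor $r$ on each side together with a commutator of $K$ against the potential. Using $M=\sum_i x_i K x_i$, I would organize the trace so that it involves either (i) terms where $K$ acts and commutes with the radial potential, giving commutators $[K_\ell,\cdot]$ that are bounded by Lemma \ref{lem:Stein} through $\|\partial_r F_{\ell,\ell'}\|$-type quantities, or (ii) terms where the difference $K_\ell-K_{\ell'}$ appears, which is controlled by Lemma \ref{lem:diff_K_ell} via the bound $(1+\ell+\ell')|\ell-\ell'|$. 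After expanding, the trace decomposes into a double sum over $\ell,\ell'$, and I expect each summand to carry a weight $\sqrt{\tr\gamma_\ell}\sqrt{\tr\gamma_{\ell'}}$ (or a trace over $\gamma_\ell$, $\gamma_{\ell'}$) multiplied by the analytic factors coming from $F_{\ell,\ell'}$ and its derivative.

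The main obstacle, and the heart of the estimate, is to ensure the double sum over angular momenta converges with the right final form $C\,\tr(\gamma)\,\tr(1+|L|^2)\gamma$. The derivative bound \eqref{estim_F_ell_diff_sup} grows like $1+\ell^2+(\ell')^2$, and Lemma \ref{lem:diff_K_ell} contributes $(1+\ell+\ell')|\ell-\ell'|$; combined with the fact that $\int_{-1}^1 P_\ell P_{\ell'} P_m=0$ unless $|\ell-\ell'|\leq m\leq\ell+\ell'$, the effective coupling between sectors $\ell$ and $\ell'$ is supported near the diagonal $\ell\approx\ell'$. This near-diagonal support is what allows me to trade one factor of $(1+\ell^2)$ against the weight $\tr|L|^2\gamma=\sum_\ell\ell(\ell+1)(2\ell+1)\tr\gamma_\ell$, while the other sum collapses to $\tr\gamma$. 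Concretely, I would bound the off-diagonal decay so that, after summing in one index, the remaining factor $1+\ell(\ell+1)$ gets absorbed into $\tr(1+|L|^2)\gamma$ and the companion index summation yields $\tr(\gamma)$; the Schwarz-type splitting $\tr\gamma_\ell\,\tr\gamma_{\ell'}\leq\frac12(\tr\gamma_\ell+\tr\gamma_{\ell'})\cdot(\text{angular weight})$ is where care is needed so as not to lose a power of $|L|^2$. I expect the Newtonian part (using only \eqref{estim_F_ell_sup} and Lemma \ref{lem:diff_K_ell}) to reproduce the estimate of \cite{HaiSch-09}, now valid for infinite rank, while the perturbation $w$ enters only through the bounded constants in Lemma \ref{lem:estim_F_ell}, yielding the stated dependence of $C$ on $w$.
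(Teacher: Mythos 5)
Your proposal follows essentially the same route as the paper's proof: expand $\gamma$ in Legendre polynomials, reduce $[M,R_\gamma]$ to the radial operators $K_\ell$ and the kernels $F_{\ell,\ell'}$, control the resulting pieces via Lemmas \ref{lem:Stein}, \ref{lem:diff_K_ell} and \ref{lem:estim_F_ell}, and sum over $\ell,\ell'$. One small remark: the near-diagonal support of the coupling is not actually needed for the convergence of the final double sum --- the crude bound $(1+\ell+\ell')|\ell-\ell'|\leq 1+\ell^2+(\ell')^2$ lets the sum factorize after symmetrization into $C\,\tr(1+|L|^2)\gamma\cdot\tr(\gamma)$, and the orthogonality of $\int_{-1}^1P_\ell P_{\ell'}P_m$ enters only inside the proof of the derivative bound \eqref{estim_F_ell_diff_sup}.
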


Inserting the estimates (\ref{eq:direct}) and \eqref{eq:exchange} in the expression (\ref{eq:st1}), one gets the bound (\ref{eq:step1}). We now provide the
\begin{proof}[Proof of Lemma \ref{lem:direct_term}]
Following the strategy of \cite{FroLen-07}, we find
\begin{multline}
  i \tr \, \left[ M, V*\rho_{\gamma} \right] \gamma = i \, \tr \left[ \sqrt{|p|^2 + m^2}, |x|^2 (V * \rho_{\gamma})\right]  \gamma \\ - 2 \,\text{Re} \, \tr \, \frac{p}{\sqrt{|p|^2 + m^2}} \cdot x (V*\rho_{\gamma}) \gamma
\end{multline}
which implies that 
\[ \left| \tr \, \left[ M, (V*\rho_{\gamma}) \right] \gamma_t \right| \leq \left( \left\| \left[ \sqrt{|p|^2 + m^2}, |x|^2 (V*\rho_{\gamma}) \right] \right\| + 2 \left\| |x| (V*\rho_{\gamma})\right\|_{\infty} \right) \tr(\gamma)\, . \]
Applying Lemma \ref{lem:Stein} to the first term in the parenthesis, we obtain
\[  \left| \tr \, \left[ M, (V*\rho_{\gamma})\right] \gamma \right| \leq \left( \| \nabla_x\,|x|^2 V*\rho_{\gamma} \|_{\infty} + \| |x| (V*\rho_{\gamma}) \|_{\infty} \right) \tr(\gamma) \, . \]
Using the spherical symmetry of $\rho_{\gamma}$, \eqref{eq:direct} is then a consequence of Corollary \ref{cor:Newt}.
\end{proof}

It rests to write the 
\begin{proof}[Proof of Lemma \ref{lem:exchange_term}]
 First we write as before
\begin{multline} \label{exchange}
i\tr \left[M, R_\gamma\right] \gamma =
i\tr\left(\sqrt{|p|^2 +m^2} |x|^2 R_\gamma \gamma\right)\\ - i\tr\left(R_\gamma |x|^2 \sqrt{|p|^2+m^2} \gamma\right) 
+ 2 \text{Re} \tr \left(R_\gamma x\cdot \frac p{\sqrt{|p|^2+m^2}}
\gamma\right).
\end{multline}
We claim that
\begin{multline}
 \left|\tr\left(\sqrt{|p|^2 +
1} |x|^2 R_\gamma \gamma\right)- \tr\left(R_\gamma |x|^2 \sqrt{|p|^2+m^2} \gamma\right)\right| \leq C\tr (\gamma)\; \tr (1 + |L|^2)\gamma,
\label{estim_exchange_1}
\end{multline}
and that
\begin{equation}
 \left|\tr \left(R_\gamma x\cdot \frac p{\sqrt{|p|^2+m^2}}\right)\gamma\right|\leq C (\tr\gamma)^2.
\label{estim_exchange_2}
\end{equation}

To prove (\ref{estim_exchange_1}) and (\ref{estim_exchange_2}), we observe that, because of the spherical symmetry of $\gamma$, we can expand the kernel of $\gamma$ as 
\begin{equation}
\gamma(x,y)=\sum_{\ell\geq0}g_\ell(|x|,|y|)(2\ell+1)P_\ell(\omega_x\cdot\omega_y)
\label{eq:form_gamma}
\end{equation}
where $\omega_x=x/|x|$, $\omega_y=y/|y|$ and $P_\ell$ is the $\ell$-th Legendre polynomial whose formula was recalled in \eqref{def:Legendre}. 

We have $$ \sqrt{|p_x|^2 + 1} \, g_\ell(|x|,|y|) \, P_\ell(\omega_x\cdot\omega_y) = P_\ell(\omega_x\cdot\omega_y) \, (K_\ell)_r \, g_\ell(|x|,|y|) $$
where $K_\ell$ was defined in \eqref{def:K_ell}. The subscript $x$ in $p_x^2$ and the subscript $r=|x|$ in $(K_\ell)_r$ indicate that these operators act on the $x$, respectively, $r=|x|$ variable. 
It follows that
\begin{equation}\label{eq:ex1}
\begin{split}
&\tr\left(\sqrt{|p|^2+m^2}  |x|^2 R_\gamma \gamma\right)\\
&\qquad\qquad=\int_{\R^6}  \gamma(y,x) \sqrt{|p_x|^2 + m^2} \left(|x|^2 R_\gamma(x,y)\right) dy dx\\
&\qquad\qquad= \sum_{\ell,\ell'}(2\ell +1)(2\ell'+1) \int_{\R^6}  g_{\ell'}
(|y|,|x|)P_{\ell'}(\omega_x\cdot\omega_y)\times\\
&\qquad\qquad\qquad\qquad\times\sqrt{|p_x|^2 +m^21}\left( |x|^2
g_\ell(|x|,|y|) P_\ell(\omega_x\cdot\omega_y) V (x-y) \right) dy \,dx\\
&\qquad\qquad=\sum_{\ell,\ell'} (2\ell + 1) (2\ell'+1)\left\langle g_{\ell'}(r,r'),
(K_{\ell'})_r  r^2 F_{\ell,\ell'}(r,r') g_\ell(r,r') \right\rangle_{L^2_r\otimes L^2_{r}},
\end{split} \end{equation}
where $F_{\ell,\ell'}$ was defined before in \eqref{def:F_ell}.
Similarly to (\ref{eq:ex1}) we obtain
\begin{equation}
\tr\left(R_\gamma |x|^2 \sqrt{|p|^2+m^2} \gamma\right) =
\sum_{\ell,\ell'}(2\ell + 1) (2\ell'+1)\left\langle g_{\ell'},
F_{\ell,\ell'} r^2 \left(K_{\ell}\right)_r g_\ell\right\rangle_{L^2_r\otimes L^2_{r}}.
\end{equation}
Therefore we can rewrite the left hand side of \eqref{estim_exchange_1} in
the form
\begin{align}
& \tr\left(\sqrt{|p|^2 +m^2} |x|^2 R_\gamma \gamma\right) - \tr\left(R_\gamma |x|^2 \sqrt{|p|^2+m^2}\, \gamma\right)\nonumber\\
&\quad=\sum_{\ell,\ell'}(2\ell + 1)(2\ell' + 1)\left\langle g_{\ell'},
 \left( \left(K_{\ell'}\right)_r r^2 F_{\ell,\ell'} - r^2
F_{\ell,\ell'} \left(K_\ell\right)_r\right)g_\ell \right \rangle_{L^2_r\otimes L^2_r}\nonumber\\ 
&\quad= \sum_{\ell,\ell'} (2\ell + 1)(2\ell' + 1)\bigg(\left\langle
g_{\ell'} , (K_{\ell} - K_{\ell'})_rr^2 F_{\ell,\ell'}g_\ell\right
\rangle_{L^2_r\otimes L^2_r}\nonumber\\
&\qquad\qquad\qquad\qquad\qquad + \left\langle g_\ell , \left[  \left(K_{\ell}\right)_r, r^2
F_{\ell,\ell'}\right] g_{\ell'}\right\rangle_{L^2_r\otimes L^2_r}\bigg).\label{eq:decomp_exchange}
\end{align}

Now using both Lemma \ref{lem:estim_F_ell} and Lemma \ref{lem:diff_K_ell}, we get
$$ \norm{ [K_\ell- K_{\ell'}] r^2 F_{\ell,\ell'}} \leq
C(1+\ell+\ell')|\ell-\ell'|.$$ This allows us to control the first term on the right side of 
(\ref{eq:decomp_exchange}). To control the second term, we remark that, from Lemma \ref{lem:Stein}, we have
$$ \norm{[K_\ell,|x|^2 F_{\ell,\ell'}]} \leq \norm{\partial_r r^2 F_{\ell,\ell'}}_{L^\infty(0,\infty)^2}\leq C.$$ 
Inserting these estimates in \eqref{eq:decomp_exchange} and noting that
$$\norm{g_\ell}_{L^2_r\otimes L^2_r}=\norm{g_\ell}_{\gS_2(L^2_r)}\leq \norm{g_\ell}_{\gS_1(L^2_r)}=\tr_{L^2_r} g_\ell$$
yields
\[ \begin{split} 
 \Big| \, \tr\Big(\sqrt{|p|^2 +
1} |x|^2 R_\gamma &\gamma\Big)- \tr\left(R_\gamma |x|^2 \sqrt{|p|^2+m^2} \gamma\right)\Big| \\ \leq \; &C \sum_{\ell, \ell'} (2\ell+1) (2\ell'+1) \left( 1 + (1+\ell + \ell') |\ell- \ell'| \right) \, \tr_{L^2_r} \, g_{\ell} \, \tr_{L^2_r} g_{\ell'} \\ 
\leq \; &C \sum_{\ell, \ell'} (2\ell+1) (2\ell'+1) \left( 1 + \ell^2 + (\ell')^2 \right) \, \tr_{L^2_r} \, g_{\ell} \, \tr_{L^2_r} g_{\ell'} \\ \leq \; &C\; \tr\gamma\;\tr(1+|L|^2)\gamma
\end{split}\]
and thus implies \eqref{estim_exchange_1}.

It remains to prove \eqref{estim_exchange_2}. Since $x\cdot p
= r \partial_r$, with $|x| = r$, we obtain
\[ \begin{split}
\Big| \tr\Big( R_\gamma x &\cdot \frac p {\sqrt{|p|^2+m^2} } \gamma \Big) \Big| \\ \leq \; &
\sum_{\ell,\ell'}(2\ell + 1)(2\ell' + 1) \left| \left\langle g_\ell ,
F_{\ell,\ell'} r
\partial_r \frac 1 {K_{\ell'}} g_{\ell'}
\right \rangle_{L^2_r\otimes L^2_r} \right|
\\ \leq \; &\sum_{\ell,\ell'}(2\ell + 1)(2\ell' + 1)\norm{g_\ell}_{\gS_2(L^2_r)}  \norm{g_{\ell'}}_{\gS_2(L^2_r)} \norm{
r F_{\ell,\ell'}}_{L^\ii}  \norm{ \partial_r  (K_{\ell'})^{-1}}_{\cB(L^2_r)} \\  \leq \; &
(\tr\gamma)^2 \end{split} \]
as was claimed. Note that we have used that $(\partial_r)^*\partial_r=-r^{-2}\partial_r r^2\partial_r\leq K_{\ell'}^2$ for all $\ell'\geq0$. Hence $\norm{\partial_r(K_{\ell'})^{-1}}\leq1$ as an operator acting on $L^2_r$.
This ends the proof of Lemma \ref{lem:exchange_term}.
\end{proof}

\subsubsection*{Step 3. Estimating $\tr(A\gamma_t)$.} We recall that $A = x \cdot p + p \cdot x$. We will show in this step that 
\begin{equation}\label{eq:step2} \frac{d}{dt} \, \tr \, A \gamma_t \leq 2 \, \cE_{\rm HF} (\gamma_0) +  \kappa \, (\tr\gamma_0)^2 \, \sup_{r\geq0} \left| w(r)+rw'(r) \right|_-. \end{equation}

In order to prove (\ref{eq:step2}), we compute as before \[ \begin{split} \frac{d}{dt} \, \tr \, A \gamma_t = \; & -i \, \tr A [\sqrt{-\Delta + m^2} +\kappa (V * \rho_{\gamma_t}) - \kappa R_{\gamma_t} , \gamma_t ] \\ = \; &-i \tr \, [A , \sqrt{-\Delta + m^2} +\kappa (V * \rho_{\gamma_t}) - \kappa R_{\gamma_t}] \gamma_t \\ = \; & 2 \, \tr \, \frac{p^2}{\sqrt{p^2 + m^2}} \gamma_t \\ &- \kappa \int dx\, dy\, (x-y) \cdot \nabla V (x-y) \left( \gamma_t (x,x) \gamma_t (y,y) - |\gamma_t (x,y)|^2 \right) \,. \end{split} \]
With $V(x) = -|x|^{-1} + w(x)$, we obtain \[ x \cdot \nabla V (x) = \frac{1}{|x|} + x \cdot \nabla w (x) = - V(x) + x \cdot \nabla w (x) + w (x) \, .\]
Since moreover $p^2/ \sqrt{p^2 + m^2} \leq \sqrt{p^2 + m^2}$, we find
\[ \begin{split} \frac{d}{dt} \, \tr \,& A \gamma_t  \\ \leq \; & 2 \, \cE_{\rm HF} (\gamma_t) \\ &- \kappa \int dx dy \left( (x-y) \cdot \nabla w (x-y) + w(x-y) \right) \, \left (\gamma_t (x,x) \gamma_t (y,y) - |\gamma_t (x,y)|^2\right) \\ \leq \; & 2 \, \cE_{\rm HF} (\gamma_t) + \kappa \, (\tr\gamma_t)^2 \, \sup_{r\geq0} |w(r)+rw'(r)|_- \, , \end{split} \] where $|f|_-\geq0$ denotes the negative  part of $f$. Here we used the fact that $\gamma_t (x,x)\gamma_t (y,y) \geq |\gamma_t (x,y)|^2$ since $\gamma \geq 0$. Equation (\ref{eq:step2}) thus follows because $\cE_{\rm HF} (\gamma_t) = \cE_{\rm HF} (\gamma_0)$ and $\tr(\gamma_t) = \tr (\gamma_0)$. 

\subsubsection*{Step 4. Conclusion of the proof of Theorem \ref{thm:HF}.} It follows from Step~2 and Step~3, that 
\begin{multline}\label{eq:concl} 0 \leq\tr \, M \gamma_t \leq \;  t^2 \left( \cE_{\rm HF} (\gamma_0)  + \frac{\kappa}{2} \, (\tr\gamma_0)^2 \sup_{r\geq0} |w(r)+rw'(r)|_-  \right) \\ + t \bigg( \tr (A \gamma_0) + C\; \tr (\gamma_0) \; \tr(1+|L|^2)\gamma_0 \bigg) + \tr M \gamma_0  
\end{multline}
for all $t< T$ (recall that $T$ is the time of existence of the maximal local solution) and where $C$ is a constant which only depends on $w$. {F}rom the assumptions on the initial datum $\gamma_0$, we find \[ \tr(M \gamma_0) \leq C\tr (1+|x|^4 + |p|^2) \gamma_0 < \infty \] and \[ |\tr (A\gamma_0)| = |\tr (x\cdot p + p \cdot x) \gamma_0| \leq \tr (|x|^2+|p|^2) \gamma_0  < \infty\, . \] 
Hence it is clear that, if $\cE_{\rm HF} (\gamma_0) + (\kappa/2) (\tr\gamma_0)^2 \, \sup_{r\geq0} |w(r)+rw'(r)|_- < 0$, our estimate (\ref{eq:concl}) contradicts the positivity of the operator $M$, for $t$ large enough. Therefore $T < \infty$, and, from the blowup alternative (see Theorem \ref{thm:LWP}), it follows that $\| \gamma_t \|_{\cX_{\rm HF}} \to \infty$, as $t \to T^-$.
\qed

\subsection{Hartree-Fock-Bogoliubov Theory: Proof of Theorem \ref{thm:HFB_cut_off}}\label{sec:proof_HFB}
The proof follows the same lines as the one of the HF case and not all the details will be provided. We take as before $M=\sum_{i=1}^3x_i\,\sqrt{m^2-\Delta}\,x_i$ and note that $M$ commutes with $L$, hence it also commutes with our cutoff projector $P_\Lambda$. The same calculation as in \eqref{eq:st1} yields
\begin{multline}
\frac{d}{dt}\tr(M\gamma_t)=-i\,\tr\big([M,P_\Lambda H_{\gamma_t}P_\Lambda ]\gamma_t\big)\\
+\frac{i}2\kappa\pscal{\alpha_t, \left[M_x + M_y, P_\Lambda\otimes P_\Lambda\left(\frac 1{|x-y|}-w(|x-y|)\right)P_\Lambda\otimes P_\Lambda\right]\alpha_t}_{L^2(\R^6)}.
\label{comput_M_2}
\end{multline}
However, since $[M,P_\Lambda]=0$, and $\gamma_t$ and $\alpha_t$ are easily seen to satisfy \eqref{HFB_mom_cut_off} for all times, we may simply rewrite this as
\begin{multline}
\frac{d}{dt}\tr(M\gamma_t)=\tr\,A\gamma_t-i\,\tr\left[M,V\ast\rho_{\gamma_t}\right]\gamma_t+i\,\tr\,\left[M,R_{\gamma_t}\right]\gamma_t\\
-\frac{i}2\kappa\pscal{\alpha_t, \left[M_x + M_y, V(x-y)\right]\alpha_t}_{L^2(\R^6)}.
\label{comput_M_3}
\end{multline}
With the help of Lemma \ref{lem:direct_term} and \ref{lem:exchange_term}, we deduce 
$$\left|\tr\left[M,V\ast\rho_{\gamma_t}\right]\gamma_t\right|\leq C\big(\tr\,\gamma_t\big)^2$$
and
$$\left|\tr\,\left[M,R_{\gamma_t}\right]\gamma_t\right|\leq C\,\tr\,\gamma_t\;\tr(1+|L|^2)\gamma_t.$$
Now, we  need an estimate on the pairing term, i.\,e.~the last term of \eqref{comput_M_3}. This is the goal of the
\begin{lemma}[Estimating the pairing term]\label{lem:estim_alpha}
Let $(\gamma,\alpha)\in\cK_{\rm HFB}$ be a radially symmetric HFB state such that $\tr(|L|^{6+\epsilon}\gamma)<\ii$ for some $\epsilon>0$. Then one has 
\begin{multline}
\left|\pscal{\alpha, \left[M_x + M_y, V(x-y)\right]\alpha}_{L^2(\R^6)}\right|\\
\leq C\big(\tr(1+|L|^3)\gamma\big)^{1/2}\big(\tr(1+|L|^{6+\epsilon})\gamma\big)^{1/2}\left(\sum_{\ell}\frac{1}{1+\ell^{1+\epsilon}}\right)^{1/2}.
\label{estim_alpha_M_1}
\end{multline}
where $C$ only depends on $w$.
\end{lemma}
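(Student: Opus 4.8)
The goal is to bound the pairing commutator
$\pscal{\alpha, [M_x+M_y, V(x-y)]\alpha}$ for a radial $\alpha$. My plan is to decompose $\alpha$ into angular momentum sectors, just as was done for $\gamma$ in the exchange-term estimate of Lemma \ref{lem:exchange_term}, and to reduce the six-dimensional commutator to a sum of one-dimensional radial commutators that can be controlled by Lemma \ref{lem:Stein} and the $F_{\ell,\ell'}$-estimates of Lemma \ref{lem:estim_F_ell}. The first step is to expand $\alpha(x,y)$ over spherical harmonics. Since $\alpha$ is radially symmetric in the sense of Definition \ref{def:sph_sym_HFB} (i.e. $(L_x+L_y)\alpha=0$), the natural expansion couples the two angular variables through Legendre polynomials, schematically $\alpha(x,y)=\sum_{\ell}a_\ell(|x|,|y|)\,(2\ell+1)P_\ell(\omega_x\cdot\omega_y)$ (with the antisymmetry $\alpha^T=-\alpha$ reflected in the radial coefficients). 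Writing $M_x=|x|K\,|x|$ restricted to each sector as $r K_\ell r$, the commutator $[M_x+M_y,V(x-y)]$ becomes, after integrating out the angles, a sum over $\ell,\ell'$ of radial commutators involving $(K_\ell)_r\,r^2 F_{\ell,\ell'}-r^2 F_{\ell,\ell'}(K_{\ell'})_r$ and its $y$-analogue.

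**Reduction to radial commutators.** Exactly as in \eqref{eq:decomp_exchange}, I split each such radial commutator into a ``difference of $K$'s'' piece, controlled by $\norm{(K_\ell-K_{\ell'})r}\leq C(1+\ell+\ell')|\ell-\ell'|$ from Lemma \ref{lem:diff_K_ell}, and a genuine commutator $[(K_\ell)_r, r^2 F_{\ell,\ell'}]$, controlled by $\norm{\partial_r r^2 F_{\ell,\ell'}}_{L^\infty}\leq C$ from \eqref{Stein1} together with \eqref{estim_F_ell_diff_sup}. Both pieces contribute a factor $C(1+\ell^2+(\ell')^2)$ per sector. The key difference from the HF case is that there is no cancellation available from angular-momentum conservation for $\alpha$: instead I must Cauchy--Schwarz the resulting double sum. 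Writing $c_\ell=\norm{a_\ell}_{\gS_2(L^2_r)}$, the estimate takes the form
\[
\Big|\pscal{\alpha,[M_x+M_y,V]\alpha}\Big|\leq C\sum_{\ell,\ell'}(2\ell+1)(2\ell'+1)(1+\ell^2+(\ell')^2)\,c_\ell\,c_{\ell'}.
\]
The plan is then to split the polynomial weight asymmetrically so as to land on the two different moments $\tr(1+|L|^3)\gamma$ and $\tr(1+|L|^{6+\epsilon})\gamma$ appearing on the right-hand side of \eqref{estim_alpha_M_1}, inserting and extracting a factor $(1+\ell^{1+\epsilon})^{\pm 1/2}$ so that a convergent series $\sum_\ell(1+\ell^{1+\epsilon})^{-1}$ is generated by Cauchy--Schwarz; this is precisely the origin of the last factor in \eqref{estim_alpha_M_1}.

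**The main obstacle.** The genuinely new and delicate point is translating the $\gS_2$-norms $c_\ell$ of the pairing coefficients back into traces of powers of $|L|$ acting on $\gamma$, rather than on $\alpha$ itself. This is where the a priori operator inequality $\Gamma^2\leq\Gamma$, equivalently $\alpha^*\alpha\leq\gamma-\gamma^2\leq\gamma$ (Remark \ref{rem:lwp}-style, cf. item iv in the HFB well-posedness remarks), must be invoked: it lets me dominate $\sum_\ell(2\ell+1)c_\ell^2$-type quantities, weighted by powers of $\ell$, by the corresponding moments $\tr(|L|^s\gamma)$. Making this bookkeeping precise — matching the angular weights carried by $\alpha$ in each sector against $\tr(|L|^s\gamma)$ through $\alpha^*\alpha\leq\gamma$, commuted past $P_\Lambda$ and $|L|^2$ — is the heart of the argument and the step I expect to require the most care; the power $6+\epsilon$ emerges exactly because the weight $(1+\ell^2+(\ell')^2)$ from the commutator combines with the $(2\ell+1)(2\ell'+1)$ measure factors and the convergence-producing $(1+\ell^{1+\epsilon})$ after the asymmetric Cauchy--Schwarz split.
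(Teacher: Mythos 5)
Your proposal follows essentially the same route as the paper's proof: Legendre expansion of $\alpha$, reduction to radial commutators controlled by Lemmas \ref{lem:estim_F_ell}, \ref{lem:Stein} and \ref{lem:diff_K_ell}, the sector-by-sector inequality $a_\ell a_\ell^*\leq g_\ell(1-g_\ell)$ (from $\Gamma^2\leq\Gamma$) to convert $\norm{a_\ell}_{\gS_2}$ into $(\tr_{L^2_r}g_\ell)^{1/2}$, and an asymmetric Cauchy--Schwarz in $\ell$ producing the convergent factor $\sum_\ell(1+\ell^{1+\epsilon})^{-1}$. The plan is correct and identifies all the key ingredients, including the step you rightly flag as the heart of the matter.
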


Assuming that Lemma \ref{lem:estim_alpha} holds true, we can conclude the proof of Theorem \ref{thm:HFB_cut_off}. Combining estimates, we arrive at the bound
\begin{align*}
&\frac{d}{dt}\tr(M\gamma_t)\\
&\qquad\leq \tr\,A\gamma_t +C\,\tr(\gamma_t)\,\tr(1+|L|^2)\gamma_t+C\tr(1+|L|^3)\gamma_t\big)^{1/2}\big(\tr(1+|L|^{6+\epsilon})\gamma_t\big)^{1/2}\\
 &\qquad\leq \tr\,A\gamma_t +C\left(1+\Lambda^{\frac{9+\epsilon}{2}}\right)\,\big(\tr\,\gamma_t\big)^2
\end{align*}
where we have used that $P_\Lambda\gamma=\gamma$.

The next step is to note that $[P_\Lambda,A]=0$, hence we have like in the HF case
\begin{multline} 
\frac{d}{dt} \, \tr \, A \gamma_t \leq 2 \, \cE_{\rm HFB} (\gamma_t,\alpha_t) \\
- \kappa \int dx\, dy\, \big((x-y) \cdot \nabla w (x-y)+w(x-y)\big) \left( \gamma_t (x,x) \gamma_t (y,y) - |\gamma_t (x,y)|^2+|\alpha_t(x,y)|^2 \right).
\end{multline}
We may use as before that $\gamma_t (x,x) \gamma_t (y,y) - |\gamma_t (x,y)|^2\geq0$ and that $\norm{\alpha_t}_{L^2(\R^6)}^2\leq \tr(\gamma_t)$ to infer
$$
\frac{d}{dt} \, \tr \, A \gamma_t \leq 2 \, \cE_{\rm HFB} (\gamma_t,\alpha_t) \\
+ \kappa \sup_{r\geq0}|w(r)+rw'(r)|_-\left(\big(\tr\gamma_t\big)^2+\tr\gamma_t\right).
$$
The end of the proof is then the same as in the HF case, using that $\tr\gamma_t=\tr\gamma_0$ for all times.

\medskip

It now rests to give the
\begin{proof}[Proof of Lemma \ref{lem:estim_alpha}]
We write as usual
\begin{equation}
\gamma(x,y)=\sum_{\ell\geq0}g_\ell(|x|,|y|)(2\ell+1)P_\ell(\omega_x\cdot\omega_y)
\label{eq:form_gamma2}
\end{equation}
where $g_\ell(r,r')$ is the kernel of a self-adjoint  operator $0\leq g_\ell\leq 1$ acting on radial functions in $L^2(\R^3;\C)$, and
\begin{equation}
\alpha(x,y)=\sum_{\ell\geq0}a_\ell(|x|,|y|)(2\ell+1)P_\ell(\omega_x\cdot\omega_y).
\label{eq:form_alpha2}
\end{equation}
This time the operator $a_\ell$ is antisymmetric, $a_\ell(r,r')=-a_\ell(r',r)$, i.e. its kernel $a_\ell(|x|,|y|)$ can be interpreted as a spherically symmetric, fermionic two-body wavefunction.

We also recall \cite{BacLieSol-94} that the condition $0\leq\Gamma\leq1$ gives $\alpha \alpha^*\leq \gamma(1-\gamma)$. The same inequality must hold on each angular momentum sector, hence we obtain the following inequality for operators acting on $L^2_r$: $a_\ell a_\ell^*\leq g_\ell(1-g_\ell)$.


Now, arguing as for the exchange term, we calculate:
\begin{multline}\label{pairing-one}
i\left \langle \alpha , \left[M_x + M_y, V(x-y)\right] \alpha
\right\rangle_{L^2(\R^6)}\\ = 2i \left \langle \alpha,\left[ \sqrt{|p_x|^2 +
1} , {|x|^2}V(x-y) \right] \alpha \right \rangle_{L^2(\R^6)} \\ - 4\Re
\left\langle \alpha,\frac{p_x}{\sqrt{|p_x|^2+1}} \cdot x V(x-y)
\alpha\right\rangle_{L^2(\R^6)}.
\end{multline}
For the second term on the right hand side of
\eqref{pairing-one} we obtain
\begin{multline}\label{pairA}
\left \langle \alpha , \frac {\partial_r }{\left(K_\ell\right)_r } r V(x-y)
\alpha\right \rangle_{L^2(\R^6)} \\ = \sum_{\ell,\ell'} (2\ell + 1)(2\ell'+1)
\left\langle a_\ell , \frac{\partial_r }{\left(K_\ell\right)_r} r
F_{\ell,\ell'}a_{\ell'}\right\rangle_{L^2(\R^6)},
\end{multline}
where $F_{\ell,\ell'}$ was defined before in \eqref{def:F_ell}. This yields to 
$$ |\eqref{pairA}| \leq
  \sum_{\ell,\ell'} (2\ell +
1)(2\ell'+1)\norm{a_\ell}_{\gS_2(L^2_r)}
\norm{a_\ell}_{\gS_2(L^2_r)}\norm{rF_{\ell,\ell'}}_{L^\infty(0,\infty)^2}.
$$
By our assumptions, we have 
$a_\ell a_\ell^* \leq g_\ell(1-g_\ell)$, hence
$$\norm{a_\ell}_{\gS_2(L^2_r)} \leq
(\tr_{L^2_r} g_\ell)^{1/2}.$$ 
Together with Lemma \ref{lem:estim_F_ell},
we can further bound \eqref{pairA} using the Cauchy-Schwarz inequality as
$$|\eqref{pairA}|\leq  \left( \sum_\ell (2\ell + 1) (\tr_{L^2_r\otimes\C^2} g_\ell)^{1/2}\right)^2
\leq  C\tr \left(1+|L|^{4}\right)\gamma.$$
Note the previous bound can be improved to $\leq C\tr \left(1+|L|^{2+\epsilon}\right)\gamma$ for $\epsilon>0$.
Similarly as in the proof of Lemma \ref{lem:exchange_term}, the first term in \eqref{pairing-one} can be expressed as
\begin{multline} \sum_{\ell,\ell'} (2\ell + 1)(2\ell'+1)\left\langle a_{\ell'} ,\big( \left(K_{\ell'}\right)_r r^2
F_{\ell',\ell}  -  r^2 F_{\ell',\ell} \left(K_\ell\right)_r\big)a_{\ell'}\right
\rangle_{L^2_r\otimes L^2_r}\\ =  \sum_{\ell,\ell'} (2\ell + 1)(2\ell'+1)
\left \langle a_{\ell'}, \big( K_{\ell'} -K_{\ell}\big)_r r^2
F_{\ell',\ell}a_\ell \right\rangle_{L^2_r\otimes L^2_r} \\ + \sum_{\ell,\ell'}
(2\ell + 1)(2\ell'+1)\left\langle a_{\ell'},[ \left(K_\ell\right)_r, r^2
F_{\ell',\ell} ]a_\ell \right \rangle_{L^2_r\otimes L^2_r}.
\end{multline}
Using Lemmas \ref{lem:estim_F_ell}, \ref{lem:Stein} and \ref{lem:diff_K_ell} like for the exchange term, we obtain
\begin{align*}
 &\left|\sum_{\ell,\ell'} (2\ell + 1)(2\ell'+1)\left\langle a_{\ell'} ,\big( K_{\ell'} |x|^2
F_{\ell',\ell}  -  |x|^2 F_{\ell',\ell} K_\ell\big)a_{\ell'}\right
\rangle_{L^2_r\otimes L^2_r}\right|\\
&\qquad\qquad\leq C\sum_{\ell,\ell'} (2\ell + 1)(2\ell'+1)(1+(\ell+\ell')|\ell-\ell'|)\norm{a_\ell}_{\gS_2}\norm{a_{\ell'}}_{\gS_2}\\
&\qquad\qquad\leq C\left(\sum_{\ell} (2\ell + 1)(1+\ell^2)\tr_{L^2_r}(g_\ell)^{1/2}\right)\left(\sum_{\ell'}(2\ell'+1)\tr_{L^2_r}(g_{\ell'})^{1/2}\right)\\
&\qquad\qquad\leq C\big(\tr(1+|L|^3)\gamma\big)^{1/2}\big(\tr(1+|L|^{6+\epsilon})\gamma\big)^{1/2}\left(\sum_{\ell}\frac{1}{1+\ell^{1+\epsilon}}\right)^{1/2}.
\end{align*}
This ends the proof of Lemma \ref{lem:estim_alpha}.
\end{proof}

\begin{appendix}
\section{Proof of Lemma \ref{lem:diff_K_ell}}\label{app:proof_lem:Stein}
We have, using the well-known \cite{LieLos-01} integral formula for $\sqrt{-\Delta + m^2}-m$ and assuming that $u,v$ are smooth enough,
\begin{align*}
&\pscal{\left(K_{\ell}-m\right)\,u,v}_{L^2([0,\ii),r^2dr)}\\
&=\pscal{(\sqrt{m^2-\Delta}-m)Y_\ell^k(\omega_x)u(|x|),Y_\ell^k(\omega_x)v(|x|)}_{L^2(\R^3)}\\
&=\frac{1}{4\pi^2}\iint_{\R^6}\frac{\cK_2(|x-y|)}{|x-y|^2}\big(\overline{Y_\ell^k(\omega_x)}\overline{u(|x|)}-\overline{Y_\ell^k(\omega_y)}\overline{u(|y|)}\big)\times\\
&\qquad\qquad\qquad\qquad\times\big(Y_\ell^{k}(\omega_x){v(|x|)}-Y_\ell^{k}(\omega_y){v(|y|)}\big)dx\,dy
\end{align*}
where $\cK_2$ is a Bessel function.
Averaging over $k$ and using that
$$\frac{1}{2\ell+1}\sum_{k=-\ell}^\ell \overline{Y_\ell^{k}(\omega)}Y_\ell^{k}(\omega')=P_\ell(\omega\cdot\omega')$$
where $P_\ell$ is the $\ell$-th Legendre polynomial, we get
\begin{multline}
\pscal{\left(K_{\ell}-K_{\ell'}\right)\,r\,u,v}_{L^2([0,\ii),r^2dr)}\\
=-\frac{1}{2\pi^2}\iint_{\R^6}\frac{\cK_2(|x-y|)}{|x-y|^2}|x|\overline{u(|x|)}v(|y|)\left(P_\ell(\omega_x\cdot\omega_y)-P_{\ell'}(\omega_x\cdot\omega_y)\right)dx\,dy.
\end{multline}
Note that we have used that for all $\ell$, $P_\ell(\omega_x\cdot\omega_x)=P_\ell(1)=1$.
Using that $|x|^2\cK_2(x)$ is bounded, this yields
\begin{multline}
\left|\pscal{\left(K_{\ell}-K_{\ell'}\right)\,r\,u,v}_{L^2([0,\ii),r^2dr)}\right|\\
\leq\frac{1}{2\pi^2}\iint_{\R^6}\frac{|x|\,|u(|x|)|\, |v(|y|)|\left|P_\ell(\omega_x\cdot\omega_y)-P_{\ell'}(\omega_x\cdot\omega_y)\right|}{|x-y|^4}dx\,dy
\end{multline}
Passing first to spherical and then to polar coordinates gives
\begin{align*}
&\left|\pscal{\left(K_{\ell}-K_{\ell'}\right)\,r\,u,v}_{L^2([0,\ii),r^2dr)}\right|\\
&\leq 4\int_{-1}^1d\alpha\int_{0}^\ii r^2dr\int_{0}^\ii s^2ds\frac{r|u(r)|\,|v(s)|\left|P_{\ell}(\alpha)-P_{\ell'}(\alpha)\right|}{\left(r^2+s^2-2rs\alpha\right)^2}\\
&=4\int_{-1}^1d\alpha\int_{0}^\ii t^2dt\int_{0}^{\pi/2}d\theta \cos^3\theta\sin^2\theta\frac{|u(t\cos\theta)|\, |v(t\sin\theta)|\,\left|P_{\ell}(\alpha)-P_{\ell'}(\alpha)\right|}{\left(1-2\sin\theta\cos\theta\alpha\right)^2}.
\end{align*}
Using the Cauchy-Schwarz inequality for the $t$ integration we get
\begin{multline*}
\left|\pscal{\left(K_{\ell}-K_{\ell'}\right)\,r\,u,v}_{L^2([0,\ii),r^2dr)}\right|\\
\leq 4\int_{-1}^1d\alpha\int_{0}^{\pi/2}d\theta \frac{\cos^{3/2}\theta\sin^{1/2}\theta\left|P_\ell(\alpha)-P_{\ell'}(\alpha)\right|}{\left(1-2\sin\theta\cos\theta\alpha\right)^2}\norm{u}_{L^2_r}\norm{v}_{L^2_r}.
\end{multline*}
Let us recall that $P_\ell(1)=1$ for all $\ell$ and introduce
$$C_{\ell,\ell'}:=\sup_{\alpha\in[-1,1]}\frac{|P_\ell(\alpha)-P_{\ell'}(\alpha)|}{1-\alpha}$$
such that
$$\norm{\big(K_{\ell}-K_{\ell'}\big)r}_{\cB(L^2([0,\ii),r^2dr))}\leq 4C_{\ell,\ell'}\int_{-1}^1d\alpha\int_{0}^{\pi/2}d\theta \frac{\cos^{3/2}\theta\sin^{1/2}\theta(1-\alpha)}{\left(1-2\sin\theta\cos\theta\alpha\right)^2}$$
which is finite as seen by computing
$$\int_{-1}^1d\alpha \frac{1-\alpha}{\left(1-u\alpha\right)^2}=\frac{(1+u)\log\left(\frac{1+u}{1-u}\right)-2u}{u^2(1+u)}\leq2-\log(1-u).$$
Hence it remains to show that
\begin{equation}
C_{\ell,\ell'}\leq C(1+\ell+\ell')|\ell-\ell'|.
\label{estim_C_ell_ell2} 
\end{equation}
The Legendre polynomials satisfy the relation
$$(1-x^2)P_n'(x)=-nxP_n(x)+nP_{n-1}(x)$$
which may be written
$$\frac{P_n(x)-P_{n-1}(x)}{1-x}=P_n(x)-(1+x)\frac{P_n'(x)}{n}.$$
Assuming for convenience $\ell'>\ell$ and summing over $n=\ell+1,\ldots,\ell'$, we obtain
$$\frac{P_{\ell'}(x)-P_{\ell}(x)}{1-x}=\sum_{n=\ell+1}^{\ell'}\left(P_n(x)-(1+x)\frac{P_n'(x)}{n}\right).$$
We have the formula
$$P_n'(x)=\frac{n(n+1)}{1-x^2}\int_x^1P_n(t)\,dt$$
which shows that $(1+x)|P_n'(x)|\leq n(n+1)$. Hence we obtain the bound
$$C_{\ell,\ell'}\leq \sum_{n=\ell+1}^{\ell'}(2+n),$$
which completes the proof of Lemma \ref{lem:Stein}. \qed

\end{appendix}

\bibliographystyle{siam}
\bibliography{biblio.bib}
\end{document}